\newtheorem{definition}{Definition}[section]
\newtheorem*{remark}{Remark}
\newtheorem{lemma}{Lemma}[section] 
\newtheorem{corollary}{Corollary}[section] 
\newtheorem{proposition}{Proposition}[section]
\title{On Explicit Stochastic Differential Algebraic Equations\footnote{This work was partially supported by the grant SERB-EMR/2016/002022-EEC of Science and Engineering Research Board, Government of India.}}
\author{Sumit Suthar\footnote{Email: sutharsumit@iisc.ac.in, sumitsuthar@live.in}, Soumyendu Raha\footnote{Email: raha@iisc.ac.in}\\ {\small Indian Institute of Science, Bengaluru}}
\date{14 June 2023}
\begin{document}
\maketitle
\begin{abstract}
Dynamical systems that are subject to continuous uncertain fluctuations can be modelled using Stochastic Differential Equations (SDEs). Controlling such system results in solving path constrained SDEs. Broadly, these problems fall under the category of Stochastic Differential-Algebraic Equations (SDAEs). In this article, the focus is on combining ideas from the local theory of Differential-Algebraic Equations with that of Stochastic Differential Equations. The question of existence and uniqueness of the solution for SDAEs is addressed by using contraction mapping theorem in an appropriate Banach space to arrive at a sufficient condition. From the geometric point of view, a necessary condition is derived for the existence of the solution. It is observed that there exists a class of completely high index SDAEs for which there is no solution. Hence, techniques to find approximate solution of completely high index equations are presented. The techniques are illustrated with examples and numerical computations.\\
\textbf{Keywords:} Stochastic Differential Algebraic Equations (SDAE), completely high index Stochastic Differential Algebraic Equations, constrained Stochastic Differential Equations, constrained dynamics, dynamical systems.
\end{abstract}

\section{Introduction}
Dynamical systems are usually modelled using differential equations. When operational conditions are imposed on the system (e.g. safety or performance related constraints), it often takes the form of algebraic equation. Such constrained equation together with the original differential equation is called Differential - Algebraic Equation (DAE). From the application point of view, DAE is often used in control of physical/engineering systems such as in robotics \cite{campbell1995solvability,krishnan1994tracking,kumar1995feedback}, power engineering \cite{schiffer2016stability, huang2020optimal}, aerodynamics \cite{uppal2017trajectory}, satellite applications \cite{fox2000numerical} etc. Other than control, DAE also finds application in areas such as biochemistry \cite{strychalski2010simulating,wiechert2009modeling}, ecological sciences \cite{jiang2016modeling}, astronomy and astrophysics \cite{ashby1996computing}, electromagnetism \cite{garcia2018systems}etc.

In most of the applications, the system usually suffers from continuous uncertain fluctuations. Systems with such uncertain fluctuations can be modelled using Stochastic Differential Equations (SDEs). However, operational constraints cannot be neglected if they exists. This results in finding solution of a constrained SDE. The Stochastic Differential Equation along with the algebraic constraint equation, forms a system of Stochastic Differential-Algebraic Equations.

There is abundant literature on both theoretical and numerical aspects of DAEs. \cite{ascher1998computer, brenan1995numerical, petzold1982differential} are some references for the numerical aspects of DAEs. DAEs of completely high index or singular (implicit) differential equations have also been studied as analysis of differential equations on manifolds, e.g. in \cite{reich1990geometrical,rheinboldt1984differential}. Like DAEs, the application of SDAEs is not limited to control but, can be found in many other areas of Science and Engineering. We find that despite having enormous potential in application, unlike DAEs, SDAEs do not have a well formed theory and attempts at developing the theory are limited to SDAEs of index 1 with noiseless constraints.
In \cite{storm1995stochastic} existence and uniqueness is established for SDEs that are subjected to convex constraint. \cite{1967stochastic} is a good reference that uses ideas from stochastic stability to solve SDEs constrained to a set. However, our interest in SDAEs implies that we are looking at a special constraint set that is given in form points that satisfy the constraint algebraic equations. An attempt towards developing a stochastic analogue for the theory of Differential-Algebraic Equations (DAEs) can be found in \cite{winkler2003stochastic, winkler2006stochastic, cong2010stochastic}. However, the work is limited to noiseless constraints and SDAEs of index 1. \cite{zhang2014stability,cong2012lyapunov} discuss stability analysis of the trivial index 1 system perturbed by noise. There have also been attempts toward numerical simulation of index 1 systems in \cite{sickenberger2006efficient, penski1999analysis, winkler2006stochastic}, which essentially reduces to simulation of stiff SDEs. 

Despite having enormous potential in application, unlike DAEs, SDAEs do not have a well formed theory and attempts at developing the theory are limited to SDAEs of index 1. In this article we extend the ideas of differential-algebraic equations to the stochastic setting. We also give computational methods for approximately solving SDAEs. The main results obtained in this article includes:
\medskip
\begin{itemize}
\item Notion of Index for SDAEs.
\item Sufficient condition for existence and uniqueness of solution of SDAE, and conditions for existence of ill-posed equations for which there is no solution in Ito SDE form.
\item Computational methods to find exact and approximate solution for completely high index equations.
\end{itemize}
\medskip
Basic terminologies and notations used in this article can be found in section \ref{section:basic_def}. As SDAE are fundamentally different from deterministic DAEs, before developing computational techniques it is necessary to establish the class of equations that may or may not have a solution. These conditions can be found as necessary and sufficient condition in section \ref{sec:n&Scondition}. Existence of \textit{ill-posed} equations, which do not have a solution, is established in section \ref{sec:necessary condition}. In section \ref{sec:preliminary results} an attempt is made to find exact solution for completely high index equations by reducing it to SDE. Based on this, we also give the notion of index of SDAEs. It is observed that not all completely high index equations can be solved using this technique. Moreover, existence of ill-posed equations pose a challenge in developing a general computational technique. Hence, the article also focuses on finding techniques to obtain an approximate solution. Section \ref{sec:gen_ito} and section \ref{sec:bounded_ito} presents methods for numerical computation. The computational methods are illustrated with examples. The article ends with comparison between the techniques for finding approximate solution (section \ref{sec:comparison of techni.}) and with some concluding remarks in section \ref{sec:conclusions}.

\section{Preliminaries}
\label{section:basic_def}
We define a multidimensional SDAE  as
\begin{subequations}
\label{eq:SDAE1}
\begin{equation}
\label{eq:SDAE-SDE}
dx = f(x,u)dt + \sigma(x,u)dW_t,  
\end{equation}
\begin{equation}
\label{eq:SDAE-AE}
g(x,u) = - \int_0^t\Gamma(x(s),u(s))dW_s;
\end{equation}
\end{subequations}
where $W_t$ is a d-dimensional Wiener process and $\{\mathcal{F}_t\}$ is filteration generated by $W_t$, $f:U\times V\to \mathbb{R}^n$, and $\sigma:U\times V\to L(\mathbb{R}^d,\mathbb{R}^n)$. $x(t,\omega) \in \mathbb{R}^n$ is called \textit{{\textbf{state variable}}} and $u(t,\omega) \in \mathbb{R}^m$ is called \textit{{\textbf{algebraic variable}}}. The second equation is the constraint equation (also called the \textit{\textbf{algebraic equation}}) with $g:U\times V \to \mathbb{R}^p$ and $\Gamma:U\times V \to L(\mathbb{R}^d,\mathbb{R}^p)$. All the stochastic integrals are Ito integrals. $U$ and $V$ are connected neighbourhoods of the initial condition $x(0)$ and $u(0)$ respectively. Equation \eqref{eq:SDAE-SDE} and equation \eqref{eq:SDAE-AE} are together called \textit{\textbf{explicit Stochastic Differential-Algebraic Equations}}, as opposed to implicit SDAEs that are usually represented as singular SDEs discussed in \cite{winkler2003stochastic} and \cite{winkler2006stochastic} (Explicit SDAEs will be simply called SDAEs as only explicit SDAEs are considered in this article). $U$ is called the \textit{\textbf{{state space}}} and $V$ the \textit{\textbf{{algebraic space}}}. An SDAE is said to be \textit{{\textbf{autonomous (non-autonomous)}}} if both SDE and the algebraic equation are autonomous (non-autonomous). A non-autonomous system can be converted to an autonomous system by coupling the system with another differential equation $dy = dt$. Therefore, it is enough to consider an autonomous SDAE. The dimension of the algebraic equation is also called \textit{\textbf{the number of algebraic equations}} and the dimension of the algebraic space is also called \textit{\textbf{the number of algebraic variables}}. Apparently, the number of algebraic equations in equation \eqref{eq:SDAE1} is $p$ and the number of algebraic variables is $m$.

If there exist continuous stochastic processes $x(\cdot,\omega):I \to U$ and $u(\cdot,\omega): I \to V$, where $I = [0,a)$ and $0<a<\infty$, that are adapted to $\{\mathcal{F}_t\}$ and satisfy the SDE as well as the algebraic equation in the mean square sense (as in case of stochastic differential equations), then the SDAE is said to have a \textbf{\textit{local solution}} upto the random time of exiting $U\times V$. In this article the term \textit{solution} should be considered as {local solution} upto the random time of exiting $U\times V$, unless explicitly specified. For convenience, a stochastic process $y(t,\omega)$ will be simply written as $y(t)$.

A usual way of computing the solution of deterministic DAE is by differentiating the constraint and then computing the algebraic variable. In case of Ito SDEs, we will Ito differentiate the constraint. Suppose we assume that there is a solution for equations \eqref{eq:SDAE1} and the algebraic variable is given by the Ito stochastic differential equation,
\begin{equation}
du = a(x,u)dt + B(x,u)dW_t
\end{equation}
such that $a:U\times V \to \mathbb{R}^m$ and $B:U\times V \to L(\mathbb{R}^d,\mathbb{R}^m)$ are Lipschitz continuous. Assuming that the required differentiability conditions are satisfied, the Ito differentiation of the algebraic equation \eqref{eq:SDAE-AE} gives,
\begin{multline}
0 = \left[\mathcal{D}_x(g) f + \mathcal{D}_u(g) a + \dfrac{1}{2}Tr(\sigma \mathcal{D}^2_{x,x}(g)\sigma^T + B \mathcal{D}^2_{u,u}(g)B^T+ 2\sigma\mathcal{D}^2_{x,u}(g)B^T)\right] dt\\
+ \left[ \mathcal{D}_x(g)\sigma + \mathcal{D}_u(g)B + \Gamma\right] dW_t.
\end{multline}
With 
\begin{equation}
B = - (\mathcal{D}_u(g))^{-1}(\mathcal{D}_x(g)\sigma+\Gamma)
\end{equation}
and 
\begin{equation}
a = - (\mathcal{D}_u(g))^{-1}(\mathcal{D}_x(g) f + 1/2Tr(\sigma \mathcal{D}^2_{x,x}(g)\sigma^T+ B \mathcal{D}^2_{u,u}(g)B^T + 2\sigma\mathcal{D}^2_{x,u}(g)B^T))
\end{equation}
we get a stochastic curve, in the product space $U\times V$, such that the differential evolution of the algebraic equation is 0. This means that if the algebraic equation is satisfied at $t=0$, then it will be satisfied for all time as long as $\mathcal{D}_ug(x,u)$ is invertible. This type of solution has been discussed in \cite{cong2010stochastic}. We summarize this result in the following lemma.
\begin{lemma}
\label{lem:SDEforSDAE}
For an SDAE in the form of equation \eqref{eq:SDAE1} with m=p, such that $f(x,u)$ and $\sigma(x,u)$ are Lipschitz, and $g(x,u)$ is $\mathcal{C}^3$; if $\mathcal{D}_u(g)(x,u)$ is invertible and has bounded inverse in some neighbourhood of  $(x(0),u(0))$, then the local solution is given by the following SDE.
\begin{equation}
\label{eq:SDE-SDAE1-sol}
dk = (f(k),a(k))dt + (\sigma(k),B(k))dW_t, 
\end{equation} 
with $k = (x,u)$, \[B = - (\mathcal{D}_ug)^{-1}((\mathcal{D}_xg)\sigma+\Gamma),\] and \[a = - (\mathcal{D}_ug)^{-1}((\mathcal{D}_xg) f + \dfrac{1}{2}Tr(\sigma (\mathcal{D}^2_{x,x}g)\sigma^T + B (\mathcal{D}^2_{u,u}g)B^T + 2\sigma(\mathcal{D}^2_{x,u}g)B^T)).\]
\end{lemma}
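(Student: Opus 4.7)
The plan is to formalize the heuristic derivation that immediately precedes the lemma. The idea is to posit that the algebraic variable evolves as an It\^o process $du = a(x,u)dt + B(x,u)dW_t$ whose coefficients $a$ and $B$ are to be determined by demanding that the constraint (\ref{eq:SDAE-AE}) propagates from its initial value, and then to invoke the classical existence and uniqueness theorem for It\^o SDEs with Lipschitz coefficients on the resulting coupled system for $k=(x,u)$.

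First, I would apply It\^o's formula to the $\mathcal{C}^3$ function $g(x,u)$ along the assumed joint process and introduce $M_t := g(x(t),u(t)) + \int_0^t \Gamma(x(s),u(s))\,dW_s$. Consistency at $t=0$ forces $M_0 = 0$, and requiring $M_t \equiv 0$ is equivalent to requiring that the drift and diffusion parts of $dM_t$ vanish identically. The diffusion part gives the linear equation $\mathcal{D}_x g\,\sigma + \mathcal{D}_u g\, B + \Gamma = 0$, and the drift part gives a linear equation for $a$ involving the second-derivative trace terms. Since $m=p$ and $\mathcal{D}_u g$ is invertible with bounded inverse on a neighbourhood of $(x(0),u(0))$, both can be solved uniquely and yield exactly the expressions for $B$ and $a$ in the statement.

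The main technical step, and the principal obstacle, is to verify that the drift and diffusion coefficients of (\ref{eq:SDE-SDAE1-sol}) are Lipschitz on some (possibly smaller) neighbourhood of $(x(0),u(0))$. The maps $f$ and $\sigma$ are Lipschitz by hypothesis. Since $g\in\mathcal{C}^3$, the second derivatives $\mathcal{D}^2_{xx}g$, $\mathcal{D}^2_{xu}g$, $\mathcal{D}^2_{uu}g$ are $\mathcal{C}^1$, hence locally Lipschitz and locally bounded. The crux is handling $(\mathcal{D}_u g)^{-1}$: boundedness of the inverse together with smoothness of $\mathcal{D}_u g$ imply, via the standard identity $A^{-1} - B^{-1} = A^{-1}(B-A)B^{-1}$, that the map $(x,u)\mapsto (\mathcal{D}_u g(x,u))^{-1}$ is also locally Lipschitz and locally bounded. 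Products and sums of such functions remain locally Lipschitz, and this must be tracked carefully for $a$, which depends on $B$ and therefore nests two occurrences of $(\mathcal{D}_u g)^{-1}$.

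Once local Lipschitz regularity is established, the standard Picard-iteration existence/uniqueness theorem for It\^o SDEs produces a unique continuous $\{\mathcal{F}_t\}$-adapted solution $k=(x,u)$ of (\ref{eq:SDE-SDAE1-sol}) on some interval $[0,a)$ with $a>0$. A final consistency check closes the argument: applying It\^o's formula to $g(x(t),u(t))$ along this constructed solution and substituting the defining equations for $a$ and $B$ gives $d\,g(x,u) = -\Gamma\,dW_t$, so integrating and using $g(x(0),u(0))=0$ recovers (\ref{eq:SDAE-AE}) in the mean-square sense on $[0,a)$. Hence $k=(x,u)$ is the desired local solution of the SDAE, and uniqueness is inherited from the uniqueness of the SDE solution.
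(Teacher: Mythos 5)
Your proposal is correct and follows essentially the same route as the paper: posit $du = a\,dt + B\,dW_t$, apply It\^o's formula to $g$, match the drift and diffusion parts against $-\Gamma\,dW_t$, and solve for $B$ and $a$ using the invertibility of $\mathcal{D}_u g$. In fact you are somewhat more careful than the paper, which presents only this heuristic derivation and does not explicitly verify the local Lipschitz regularity of $a$ and $B$ (needed to invoke the SDE existence and uniqueness theorem) nor the final consistency check that the constructed process satisfies the constraint in the mean-square sense.
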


\section{Index of SDAE}
\label{sec:preliminary results}
From lemma \ref{lem:SDEforSDAE}, it is clear that if $\mathcal{D}_ug$ is not invertible then we can not obtain a solution by simply Ito differentiating the constraint. Therefore, SDAEs can be categorized on the basis of invertibility of $\mathcal{D}_ug$.
\begin{definition}
An SDAE in form of equation \eqref{eq:SDAE1} will be called an \textbf{\textit{index 1 SDAE}}, if $\mathcal{D}_u(g)(x,u)$ is invertible. 
\end{definition}
\noindent One of the cases in which $\mathcal{D}_u(g)(x,u)$ is not invertible is when the algebraic equation does not admit any algebraic variable, i.e. when $g$ and $\Gamma$ are not functions of $u$.
\begin{definition}
\label{def:high_index}
An SDAE that does not admit algebraic variable in the algebraic equation (i.e. when $g$ and $\Gamma$ are not functions of the algebraic variable) will be called  {\textbf{completely high index}} SDAE. A generic form for completely high index SDAEs is given by,
\begin{subequations}
\label{eq:SDAE2}
\begin{equation}
\label{eq:SDAE2-SDE}
dx = f(x,u)dt + \sigma(x,u)dW_t,  
\end{equation}
\begin{equation}
\label{eq:SDAE2-AE}
   0 = g(x) + \int_0^t\Gamma(x(s))dW_s;
\end{equation}
\end{subequations}
with $x(t)\in \mathbb{R}^n$, $u(t)\in \mathbb{R}^m$, $W_t$ as d-dimensional Wiener process, $f:U\times V\to \mathbb{R}^n$, $\sigma:U\times V\to L(\mathbb{R}^d,\mathbb{R}^n)$, $g:U\to \mathbb{R}^p$ and $\Gamma:U \to L(\mathbb{R}^d,\mathbb{R}^p)$.
\end{definition}
\noindent For index 1 SDAEs, we know that the algebraic variables can be computed by equating the differential evolution of the algebraic equation to zero (lemma \ref{lem:SDEforSDAE}). If we use a similar approach for completely high index SDAEs then the Ito differentiation of equation \eqref{eq:SDAE2-AE} yields,
\begin{multline}
\label{equation:ito_diff_constraint_high_index}
0 = [(\mathcal{D}_xg(x_t))f(x_t,u_t)+ \dfrac{1}{2}Tr(\sigma(x_t,u_t)(\mathcal{D}^2_{x,x}g(x_t))\sigma^T(x_t,u_t))]dt \\ + [(\mathcal{D}_xg(x_t))\sigma(x_t,u_t) + \Gamma(x_t)]dW_t.
\end{multline}
We immediately notice that in order to make the differential evolution zero, both the martingale and the drift must be zero. Hence, when the number of variables is the same as the number of equations, then there is no way to find an exact solution for completely high index SDEs. However, if we have more number of algebraic variables then there might be a possibility of finding an exact solution. This is exactly why we have not put any restriction on the number of algebraic variables and the number of algebraic equations in the definition of completely high index SDAEs. 

\begin{lemma}
\label{lem:existence_SDAE2}
Consider a completely high index SDAE in the form of equation \eqref{eq:SDAE2} such that $f(x,u)$, $\sigma(x,u)$ and $\Gamma(x)$ are as smooth as required. Assume that the SDAE has a solution. Consider the following new system of SDAE,
\begin{subequations}
\begin{equation}
dx = f(x,u)dt + \sigma(x,u)dW_t,  
\end{equation}
\begin{equation}
h(x,u) = 0;
\end{equation}
\end{subequations}
where
\begin{center}
$h(x,u)= \begin{bmatrix} ( D_x(g)(x))f(x,u) + \dfrac{1}{2}Tr(\sigma(x,u) ( D^2_{x,x}(g)(x))\sigma^T(x,u))\\( D_x(g)(x))\sigma_1(x,u) + \Gamma_1(x) \\ \vdots \\( D_x(g)(x))\sigma_d(x,u) + \Gamma_d(x) \end{bmatrix}$,
\end{center}
$\sigma = [\sigma_1|\hdots |\sigma_d]$, and $\Gamma = [\Gamma_1|\hdots |\Gamma_d]$. Then the new SDAE has the same solution as that of the given SDAE.
\end{lemma}
\begin{proof}
Ito differentiation of the algebraic equation gives
\[0 = \left[ D_x(g) f(x,u) + \dfrac{1}{2}Tr(\sigma  D^2_{x,x}(g)\sigma^T) \right] dt + \left[  D_x(g)\sigma(x,u) + \Gamma\right] dW_t.\]
$h(x,u) = 0$ makes both the drift and the noise term in the above equation zero.
\end{proof}
If the new SDAE obtained in lemma \ref{lem:existence_SDAE2} is of index 1, then the Ito stochastic solution can be found using lemma \ref{lem:SDEforSDAE} (provided the conditions in lemma \ref{lem:SDEforSDAE} are satisfied). However, this comes at a cost of having more algebraic variables than the number of algebraic equations. If a completely high index SDAE in form of equation \eqref{eq:SDAE2} can be reduced to another SDAE of index 1 using the method proposed in lemma \ref{lem:existence_SDAE2}, then we will say that the SDAE  is of {\textit{Index 2}}. Therefore, for index 2 equations, the relation between the number of algebraic variables $(m)$ and number of algebraic equations $(p)$ is given by $m = p(1+d)$; where $d$ is the dimension of the Wiener process. In general, if we have to apply lemma \ref{lem:existence_SDAE2} $l$ times to ensure that the resulting SDAE is of index 1, then the relation between the number of algebraic variables ($m$) and the number of algebraic equations ($p$) must be $m = p(1+d)^{l}$, where $d$ is the dimension of the Wiener process. We will call $l+1$ as the index of an SDAE.
\begin{definition}
Let $m$ be the number of algebraic variables, $p$ be the number of algebraic equations, and $d$ be the dimension of the Wiener process. An SDAE will be said to be \textbf{\textit{indexed}} if there exists a positive integer $J$ such that
\[J = 1+\dfrac{log(m) - log(p)}{log(1+d)}.\]
Furthermore, if $(J-1)$ times repeated application of lemma \ref{lem:existence_SDAE2} gives an index 1 SDAE, then $J$ will be called the \textbf{\textit{index}} of the SDAE.
\end{definition}
As the dimension of spaces is not available by choice, not every SDAE can be indexed. Moreover, in many real world problems, we usually encounter completely high index equations in which $m = p$. 

\section{Existence and uniqueness of the solution}
\label{sec:n&Scondition}
Using the definitions from the previous section, we see that the trivial method from lemma \ref{lem:SDEforSDAE} cannot be used for completely high index equations. Therefore, before constructing methods for computation, it is important to obtain necessary and sufficient conditions for the existence of a solution for SDAEs. Moreover, we shall see in section \ref{sec:necessary condition}, that the necessary condition is required to prove the existence of a class of completely high index problems called \textit{ill-posed} equations which have no Ito stochastic solution.
\subsection{Sufficient condition for existence of unique solution}
\label{sec:existence_and_uniq}
The proof of existence of unique solution of a Stochastic Differential Equation depends on Picard-type iteration or contraction mapping theorem in an appropriate metric space. We will use the idea of contraction mapping to prove the existence of a unique solution of an SDAE given by equation \eqref{eq:SDAE1}. For this, let $(\Omega, \mathcal{F},\{\mathfrak{F}_t\}, P)$ be the filtered probability space and the Brownian motions $W_j$ be adapted to the filtration $\mathfrak{F}$. Let $||\cdot||_c$, $||\cdot||_x$ and $||\cdot||_u$ be norms in $\mathbb{R}^{n+m}$, $\mathbb{R}^{n}$ and $\mathbb{R}^{m}$ respectively, such that $||(a,b)||^2_c = ||a||^2_x + ||b||^2_u$. Let $E_x[a,b]$ denote the Banach space $(L^2_\mathfrak{F}(\Omega;\mathcal{C}([a,b],\mathbb{R}^n)), (\mathbb{E}\sup_{t\in [a,b]}\left|\left|\cdot\right|\right|^2_x)^{1/2})$ i.e. the space of all continuous $\mathfrak{F}$-adapted processes from $[a,b]$ to $\mathbb{R}^n$. Similarly, let $E_u[a,b]$ denote the space $(L^2_\mathfrak{F}(\Omega;\mathcal{C}([a,b],\mathbb{R}^m)), (\mathbb{E}\sup_{t\in [a,b]}\left|\left|\cdot\right|\right|^2_u)^{1/2})$. Consider a function $\phi: E_x[0,a]\times E_u[0,a]\to E_x[0,a]\times E_u[0,a]$ such that
\begin{equation}
\label{equation:inter_sufficient_0}
\phi(x_t,u_t) = \begin{bmatrix} x_0 + \int_0^tf(x(s),u(s))ds\\ u(t) - g(x(t),u(t))\end{bmatrix} + \begin{bmatrix}\int_0^t\sigma(x(s),u(s))dW_s\\  - \int_0^t \Gamma(x(s),u(s))dW_s\end{bmatrix},
\end{equation}
If $\phi$ is a contraction map then the fixed point of $\phi$ is the local solution for the SDAE given by equation \eqref{eq:SDAE1}. Hence, the problem has reduced to proving that $\phi$ is indeed a contraction map.  In the  proof of the following statement we prove that if we consider  $\sup\{||\mathcal{D}(h(x,u))||, (x,u)\in B_{\epsilon_x}(x_0)\times B_{\epsilon_u}(u_0)\} < \dfrac{1}{\sqrt{2}}$, then $\phi$ becomes a contraction map.
\begin{proposition}
\label{proposition:existence_SDAE1}
Consider the SDAE in the form of equation \eqref{eq:SDAE1} such that the number of algebraic variables ($m$) is equal to the number of algebraic equations ($p$), i.e. $m = p$. Let $f(x,u)$, $\sigma(x,u)$, and $\Gamma(x,u)$ be locally Lipschitz in $B_{\epsilon_x}(x_0)\times B_{\epsilon_u}(u_0)$ for some $u_0\in V$ and $x_0\in U$. Suppose $h(x,u) = (0,u - g(x,u))$ such that $h$ is locally Lipschitz in $B_{\epsilon_x}(x_0)\times B_{\epsilon_u}(u_0)\}$ with constant $M$ such that \[M<\dfrac{1}{\sqrt{2}}.\] Suppose $g(x_0,u_0) = 0$. Then there exists a unique local solution for the equation, i.e. there exists some $a>0$ such that the solution exists for all $t\in [0,min(a,\tau(\omega)))$, where $\tau(\omega) = \inf\{t\in \mathbb{R}^+|(x(t,\omega),u(t,\omega))\notin B_{\epsilon_x}(x(0))\times B_{\epsilon_u}(u(0))\}$.
\end{proposition}
\begin{proof}
In the preceding discussion, we have observed that if we consider $\phi(x_t,u_t)$ as given in equation \eqref{equation:inter_sufficient_0}, then it is enough to prove that $\phi$ is a contraction map. However, as the Lipschitz condition on the coefficients $f(x,u)$, $\sigma(x,u)$, and $\Gamma(x,u)$ is local in $B_{\epsilon_x}(x(0))\times B_{\epsilon_u}(u(0))$, we will consider processes that are stopped on exiting $B_{\epsilon_x}(x(0))\times B_{\epsilon_u}(u(0))$. If required, the size of $\epsilon_x$ and $\epsilon_u$ can be reduced to ensure that $\mathbb{E}\sup_{t\in [0,a]}||(x_t,u_t)||_c<\infty$ in $B_{\epsilon_x}(x(0))\times B_{\epsilon_u}(u(0))$. This ensures that the function $\phi$ indeed maps to the Banach space $E_x[0,a]\times E_u[0,a]$. Therefore, the modified function $\phi: E_x[0,a]\times E_u[0,a]\to E_x[0,a]\times E_u[0,a]$ is given as
\begin{equation}
\label{equation:inter_sufficient_1}
\phi(x_t,u_t) = \begin{bmatrix} x_0 + \int_0^{t\wedge\tau}f(x(s),u(s))ds\\ u(t) - g(x(t),u(t))\end{bmatrix} + \begin{bmatrix}\int_0^{t\wedge\tau}\sigma(x(s),u(s))dW_s\\  - \int_0^{t\wedge\tau} \Gamma(x(s),u(s))dW_s\end{bmatrix},
\end{equation}
where $\tau(\omega) = \inf\{t\in I|(x(t,\omega),u(t,\omega))\notin B_{\epsilon_x}(x(0))\times B_{\epsilon_u}(u(0))\}$. The fixed point of $\phi$ gives the local solution for the SDAE in form of equation \eqref{eq:SDAE1}. Hence, we need to prove that $\phi$ is indeed a contraction map.

As $f$, $\sigma$ and $\Gamma$ are locally Lipschitz, we consider $(x(t),u(t))\in B_{\epsilon_x}(x(0))\times B_{\epsilon_u}(u(0))\subset U\times V$ such that $f$, $\sigma$ and $\Gamma$ are Lipschitz with constants $k_f$, $k_\sigma$ and $k_\Gamma$ respectively. It is assumed that $g(x(0),u(0)) = 0 $. If $y_t(\omega),z_t(\omega) \in E_x[0,a]$, and $v_t(\omega), r_t(\omega)\in E_u[0,a]$, then for fixed sample path we get
\begin{multline}
\label{equation:inter_sufficient_3}
||\phi(y_t,v_t) - \phi(z_t,r_t)||^2_c \leq 2\left|\left|\int_0^{t\wedge\tau}f(y(s),v(s)) - f(z(s),r(s))ds\right|\right|^2_x \\
+2\left|\left|\int_0^{t\wedge\tau} \sigma(y(s),v(s)) - \sigma(z(s),r(s))dW_s\right|\right|^2_x+ 2\left|\left| v(t) - r(t) - (g(y(t),v(t)) - g(z(t),r(t)))\right|\right|_u^2\\
+2\left|\left| \int_0^{t\wedge\tau} \Gamma(y(s),v(s)) -\Gamma(z(s),r(s))dW_s\right|\right|_u^2.
\end{multline}
Using Jensen's inequality in the first term,
\begin{multline}
{||\phi(y_t,v_t) - \phi(z_t,r_t)||^2_c}
\leq 2k_f^2\left(\int_0^{t\wedge\tau}||(y(s),v(s)) - (z(s),r(s))||_cds\right)^2\\
+ 2\left|\left|\int_0^{t\wedge\tau} \sigma(y(s),v(s)) - \sigma(z(s),r(s))dW_s\right|\right|^2_x
+ 2\left|\left| v(t) - r(t) - (g(y(t),v(t)) - g(z(t),r(t)))\right|\right|_u^2\\ +2\left|\left| \int_0^{t\wedge\tau} \Gamma(y(s),v(s)) - \Gamma(z(s),r(s))dW_s\right|\right|_u^2.
\end{multline}
\begin{multline}
\implies {||\phi(y_t,v_t) - \phi(z_t,r_t)||^2_c} \leq 2a^2k_f^2\sup_{t\in [0,a]}||(y(t),v(t)) - (z(t),r(t))||^2_c\\
+ 2\sup_{t\in [0,a]}\left|\left|\int_0^{t\wedge\tau} \sigma(y(s),v(s)) - \sigma(z(s),r(s))dW_s\right|\right|^2_x\\
+ 2\sup_{t\in [0,a]}\left|\left| v(t) - r(t) - (g(y(t),v(t)) - g(z(t),r(t)))\right|\right|_u^2\\
+2\sup_{t\in [0,a]}\left|\left| \int_0^{t\wedge\tau} \Gamma(y(s),v(s)) - \Gamma(z(s),r(s))dW_s\right|\right|_u^2.
\end{multline}
\begin{multline}
\implies \sup_{t\in [0,a]}{||\phi(y_t,v_t) - \phi(z_t,r_t)||^2_c}\leq 2a^2k_f^2\sup_{t\in [0,a]}||(y(t),v(t)) - (z(t),r(t))||^2_c\\
+ 2\sup_{t\in [0,a]}\left|\left|\int_0^{t\wedge\tau} \sigma(y(s),v(s)) - \sigma(z(s),r(s))dW_s\right|\right|^2_x\\
+ 2\sup_{t\in [0,a]}\left|\left| v(t) - r(t) - (g(y(t),v(t)) - g(z(t),r(t)))\right|\right|_u^2\\
+2\sup_{t\in [0,a]}\left|\left| \int_0^{t\wedge\tau} \Gamma(y(s),v(s)) - \Gamma(z(s),r(s))dW_s\right|\right|_u^2.
\end{multline}
Since this is true for every sample path,
\begin{multline}
\mathbb{E}\sup_{t\in [0,a]}{||\phi(y_t,v_t) - \phi(z_t,r_t)||^2_c} \leq 2a^2k_f^2\mathbb{E}\sup_{t\in [0,a]}||(y(t),v(t)) - (z(t),r(t))||^2_c\\
+ 2\mathbb{E}\sup_{t\in [0,a]}\left|\left|\int_0^{t\wedge\tau} \sigma(y(s),v(s)) - \sigma(z(s),r(s))dW_s\right|\right|^2_x\\
+ 2\mathbb{E}\sup_{t\in [0,a]}\left|\left| v(t) - r(t) - (g(y(t),v(t)) - g(z(t),r(t)))\right|\right|_u^2\\
+2\mathbb{E}\sup_{t\in [0,a]}\left|\left|  \int_0^{t\wedge\tau}\Gamma(y(s),v(s)) -\Gamma(z(s),r(s))dW_s\right|\right|_u^2.
\end{multline}
Using Doob's inequality on the Ito integral terms,
\begin{multline}
\mathbb{E}\sup_{t\in [0,a]}{||\phi(y_t,v_t) - \phi(z_t,r_t)||^2_c}\leq 2a^2k_f^2\mathbb{E}\sup_{t\in [0,a]}||(y(t),v(t)) - (z(t),r(t))||^2_c\\
+8k_\sigma^2\mathbb{E}\int_0^{t\wedge\tau} ||(y(s),v(s)) - (z(s),r(s))||^2_cds\\
+ 2\mathbb{E}\sup_{t\in [0,a]}\left|\left| v(t) - r(t) - (g(y(t),v(t)) - g(z(t),r(t)))\right|\right|_u^2\\
+8k_\Gamma^2 \mathbb{E}\int_0^{t\wedge\tau} ||(y(s),v(s)) - (z(s),r(s))||^2_cds.
\end{multline}
\begin{multline}
\label{equation:inter_sufficient10}
\implies \mathbb{E}\sup_{t\in [0,a]}{||\phi(y_t,v_t) - \phi(z_t,r_t)||^2_c}\\
\leq ((2k^2_f) a^2  + 8(k^2_\sigma + k^2_\Gamma)a)\mathbb{E}\sup_{t\in [0,a]}||(y(t),v(t)) - (z(t),r(t))||^2_c\\
+ 2\mathbb{E}\sup_{t\in [0,a]}\left|\left| v(t) - r(t) - (g(y(t),v(t)) - g(z(t),r(t)))\right|\right|_u^2.
\end{multline}

Since $M$ is the Lipschitz constant of $h(x,u) = (0,u-g(x,u))$,
\[||(0,(v-r) - (g(y,v) - g(z,r)))||_c\leq M||(y-z,v-r)||_c.\]
Therefore,
\[\mathbb{E}\sup_{t\in [0,a]}\left|\left| v(t) - r(t) - (g(y(t),v(t)) - g(z(t),r(t)))\right|\right|_u^2\]\[\leq M^2\mathbb{E}\sup_{t\in [0,a]}||(y(t),v(t)) - (z(t),r(t))||^2_c.\]
On substituting this in equation  \eqref{equation:inter_sufficient10} we get,
\begin{multline}
\mathbb{E}\sup_{t\in [0,a]}{||\phi(y_t,v_t) - \phi(z_t,r_t)||^2_c}\\
\leq( (2k^2_f) a^2  + 8(k^2_\sigma + k^2_\Gamma)a +2M^2)\mathbb{E}\sup_{t\in [0,a]}||(y(t),v(t)) - (z(t),r(t))||^2_c.
\end{multline}
To make $\phi$ a contraction map, we must choose $a>0$ such that 
\begin{equation}
\label{equation:intermediate_sufficeint11}
(2k^2_f) a^2  + 8(k^2_\sigma + k^2_\Gamma)a+ 2M^2\leq 1.
\end{equation}
As we are given that $M<1/\sqrt{2}$, we can find small enough $a>0$ such that it satisfies equation  \eqref{equation:intermediate_sufficeint11}. With this small enough $a>0$, $\phi$ becomes a contraction map. Hence, we have a local solution for the SDAE upto time $min(a,\tau(\omega))$.
\end{proof}
\begin{remark}
{If the solution does not exist or is not unique then the conditions of the proposition are violated. However, as the proposition is not an ``if and only if" statement, the converse may not be true.}
\end{remark}
If we assume that the function $g$ is $\mathcal{C}^1$, then we find that
\[M = \sup_{(x,u)\in B_{\epsilon_x}(x_0)\times B_{\epsilon_u}(u_0)} \left|\left|\begin{matrix}
0\qquad \quad 0\\
- D_xg \quad I -  D_ug
\end{matrix}\right|\right|.\]
In this case, it can be seen that if $g$ is not a function of $u$, then $D_ug = 0$ and hence the condition in the proposition is violated; i.e. in case of completely high index SDAEs wherein the constraint is not a function of the algebraic variable, the condition in proposition \ref{proposition:existence_SDAE1} is always violated.  But, as the violation of the condition does not amount to non-existence of unique solution, a solution may or may not exist for completely high index SDAE. Therefore, proposition \ref{proposition:existence_SDAE1} is of no use in case of completely high index problems with $m = p$. Hence, a necessary condition is required.

\subsection{Necessary condition}
\label{sec:necessary condition}
To find a necessary condition for existence of a solution, we assume that there exists a solution for SDAE equation \eqref{eq:SDAE1}
\begin{subequations}
\begin{equation}
dx = f(x,u)dt + \sigma(x,u)dW_t, 
\tag{\ref{eq:SDAE-SDE}}
\end{equation}
\begin{equation}
g(x,u) = - \int_0^t\Gamma(x(s),u(s))dW_s;
\tag{\ref{eq:SDAE-AE}}
\end{equation}
\end{subequations}
such that the algebraic variable is given by $du = a(x,u)dt + B(x,u)dW_t$.

To recall, $W_t$ is a d-dimensional Wiener process, $f:U\times V\to \mathbb{R}^n$, $g:U\times V\to \mathbb{R}^p$, $\sigma:U\times V\to L(\mathbb{R}^d,\mathbb{R}^n)$, and $\Gamma:U\times V\to L(\mathbb{R}^d,\mathbb{R}^p)$.

For simplicity, let us assume that the constraint is noiseless i.e. $\Gamma =  0$. Ito differentiation of the constraint gives
\begin{multline}
0 = \left[ D_x(g) f +  D_u(g) a\right] dt + \left[  D_x(g)\sigma +  D_u(g)B\right] dW_t\\
+\left[ \dfrac{1}{2}Tr(\sigma  D^2_{x,x}(g)\sigma^T + B  D^2_{u,u}(g)B^T+ 2\sigma D^2_{x,u}(g)B^T)\right] dt.
\end{multline}
This means that both the drift and the noise is zero. In particular, the noise coefficient being zero implies that \[(\sigma_i(x(t),u(t)), B_i(x(t),u(t)))\in Ker( D_{(x(t),u(t))}(g)),\]
where $(\sigma_i(x(t),u(t)), B_i(x(t),u(t)))$ is the $i^{th}$ column vector of the matrix $\begin{bmatrix}\sigma(x,u) \\ B(x,u)\end{bmatrix}$.
\begin{lemma}
\label{lemma:subbundle}
Consider SDAE given in form of equation \eqref{eq:SDAE1}. Let $\Gamma = 0$. Suppose there exists Ito stochastic solutions for any initial condition that also satisfies the constraint such that the algebraic variable is given by the following SDE,
\[du = a(x,u)dt + B(x,u)dW_t.\]
If $g(x,u)$ is $\mathcal{C}^3$ and all the vector fields are $\mathcal{C}^1$, then
\[Img(\sigma(k),B(k))\subset T_kS,\]
where $S = \{g^{-1}(0)\}$ is a submanifold of $U\times V$.
Moreover, if $Img(\sigma(k),B(k))$ is full rank for all $k\in S$, then 
\[\underset{k\in S}{\bigcup} Img(\sigma(k),B(k))\] will be a distribution on S; where $Img(\sigma(k),B(k))$ is the column space of $\begin{bmatrix}\sigma(x,u) \\ B(x,u)\end{bmatrix}$.
\end{lemma}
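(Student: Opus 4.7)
First I would set up the geometry. Because $\mathcal{D}_u g$ is invertible in a neighborhood of each point of $S$ (index 1 hypothesis with $m=p$), $g$ is a submersion along $S = g^{-1}(0)$, so $S$ is a smooth $n$-dimensional submanifold of $U\times V$, and at each $k \in S$ the tangent space is $T_k S = \ker \mathcal{D} g(k)$, where $\mathcal{D} g = [\mathcal{D}_x g \;\; \mathcal{D}_u g]$. The target of the lemma is thus to show that every column of $\begin{bmatrix}\sigma(k)\\ B(k)\end{bmatrix}$ lies in $\ker \mathcal{D} g(k)$, and that this assignment varies regularly enough with $k$ to qualify as a distribution.

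The heart of the argument is an application of Ito's formula to the scalar process $g(x(t),u(t))$ along an arbitrary solution starting at an arbitrary $k_0 \in S$. Since $g$ is $\mathcal{C}^3$ and $(x,u)$ satisfies the coupled Ito system with $\mathcal{C}^1$ coefficients, Ito's formula yields
\[
dg(x,u) = \alpha(x,u)\,dt + \bigl(\mathcal{D}_x g\,\sigma + \mathcal{D}_u g\,B\bigr)\,dW_t,
\]
with $\alpha$ a drift built from first and second derivatives of $g$ paired with $f,a,\sigma,B$ (and, because $\Gamma = 0$, no extra diffusion term). By hypothesis $g(x(t),u(t)) \equiv 0$ for $t \ge 0$, so $g(x_t,u_t)$ is simultaneously a continuous semimartingale and identically zero. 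By the uniqueness of the semimartingale decomposition (or, concretely, because a continuous local martingale of zero quadratic variation must be constant), the finite-variation part and the martingale part both vanish; in particular the integrand of the stochastic integral is zero in $L^2(dP\otimes dt)$.

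From the vanishing of the diffusion coefficient, evaluation at $t = 0$ gives
\[
\mathcal{D}_x g(k_0)\,\sigma(k_0) + \mathcal{D}_u g(k_0)\,B(k_0) = 0,
\]
i.e. $\mathcal{D} g(k_0)\begin{bmatrix}\sigma(k_0)\\ B(k_0)\end{bmatrix} = 0$. Because solutions are assumed to exist from every initial condition on $S$, $k_0$ is arbitrary, so for every $k \in S$ each column of $\begin{bmatrix}\sigma(k)\\ B(k)\end{bmatrix}$ lies in $\ker \mathcal{D} g(k) = T_k S$. Hence $\bigcup_{k\in S}\mathrm{Img}(\sigma(k),B(k)) \subseteq TS$, and since $\sigma,B$ are $\mathcal{C}^1$ this assignment of subspaces is smooth, giving a distribution on $S$.

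The main obstacle I anticipate is the step that converts the probabilistic identity ``$g(x_t,u_t) \equiv 0$'' into the pointwise algebraic identity ``$\mathcal{D}_x g\,\sigma + \mathcal{D}_u g\,B = 0$ on $S$.'' One must argue (i) that the martingale part of a semimartingale that is identically zero is zero, so its $L^2$-integrand vanishes $dP\otimes dt$-a.e., and (ii) that continuity of $\sigma, B, \mathcal{D} g$ in $k$ together with the fact that solutions exist from every $k_0 \in S$ propagates the identity from an a.e.\ statement along one trajectory to a pointwise statement on the whole of $S$. A subtler point, worth flagging, is that calling the result a \emph{distribution} in the subbundle sense requires constant rank of $(\sigma,B)$ on $S$; under the standing smoothness hypotheses this either needs an additional rank assumption or must be read in the generalized (singular) distribution sense.
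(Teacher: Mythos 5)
Your proof follows essentially the same route as the paper's: identify $T_kS=\ker\mathcal{D}g(k)$ via the submersion theorem, apply Ito's formula to $g(x_t,u_t)\equiv 0$ to force the diffusion coefficient $\mathcal{D}_xg\,\sigma+\mathcal{D}_ug\,B$ to vanish, and use the existence of solutions from every initial point of $S$ to propagate this to all of $S$. You are in fact more careful than the paper at the key step (invoking uniqueness of the semimartingale decomposition, which the paper simply asserts as ``clear'') and in flagging the constant-rank caveat for calling the result a distribution, but the argument is the same.
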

\begin{proof}
Let $k = (x,u)$. This results in
  \[dk =\begin{bmatrix}f(k) \\ a(k)\end{bmatrix}dt + \begin{bmatrix}\sigma(k)\\ B(k)\end{bmatrix}dW_t.\]
Using the submersion theorem from differential geometry, if $0$ is a regular value of $g$ then the level set $S = \{g^{-1}(0)\}$ is a submanifold of $U\times V \subset \mathbb{R}^n\times\mathbb{R}^m$ with tangent space given by $T_{k}S = ker(Dg(k))$.\\
From the Ito differentiation formula, it is clear that if $k(t,\omega)$ is the solution of the given SDAE then
\[D_xg(k(t,\omega)) \cdot \sigma_i(k(t,\omega)) + D_ug(k(t,\omega)) \cdot B_i(k(t,\omega)) = 0\]
$\forall \quad i\in\{1,\hdots ,d\}$.
Since
\[T_kS = ker(Dg(k))= ker([D_xg(x,u) \quad D_ug(x,u)]),\] it can be concluded that for a given sample path,
\[(\sigma_i(k(t,\omega))), B_i(k(t,\omega))) \in T_{k(t,\omega)}S \quad \forall \quad i \in\{1,\hdots ,d\}.\]
But this is true for all sample paths (i.e. $\forall\: \omega\in\Omega$). So the sample paths for any initial condition $k(0)\in S$ implies that $k(t,\omega)\in S$. Therefore,
\[ Img\begin{bmatrix}\sigma(k) \\ B(k)\end{bmatrix}\subset T_{k}S \: \forall \: k\in S.\]
Moreover when $Img(\sigma(k),B(k))$ is full rank for all $k\in S$, $\underset{k\in S}{\bigcup} Img(\sigma(k),B(k))$ is nothing but the kernel distribution on $S$.
\end{proof}

This statement can be extended to equations with noisy constraint as well. However, the SDAE needs to be tweaked. 
An SDAE with noisy constraint (equation \eqref{eq:SDAE1}) can be expressed as an SDAE with noiseless constraint by substituting
\[z(t) = \int_0^t\Gamma(x(s), u(s))dW_s.\]
With this,
\begin{subequations}
\label{eq:SDAE1_new}
\begin{equation}
\label{eq:SDAE-SDE_new}
dx = f(x,u)dt + \sigma(x,u)dW_t, 
\end{equation}
\begin{equation}
\label{eq:SDAE-SDE_new2}
dz =0dt + \Gamma(x,u)dW_t,
\end{equation}
\begin{equation}
\label{eq:SDAE-AE_new}
h(x,z,u) = g(x,u)+ z = 0;
\end{equation}
\end{subequations}
and we get a new SDAE with noiseless constraint of higher dimensional state space. This idea is similar in spirit to the method of suspension used in the theory of ordinary differential equations.

The solution of the original SDAE will match the solution of equation \eqref{eq:SDAE1_new} if the initial condition for $z$ is $z(0) = 0$. It should be noted that while there is freedom to choose the initial condition for $x$ and $u$ (as long as the initial condition for $x$ and $u$ satisfies the constraint), the initial condition for $z$ must be $0$. Therefore, a geometric result that is similar to lemma \ref{lemma:subbundle} can not be obtained by direct application of the lemma to equation \eqref{eq:SDAE1_new}. However, if the solution exists, then it will satisfy the constraint $h(x,z,u) = 0$. In such a case, following the arguments in proof of the lemma \ref{lemma:subbundle}, we find that $(\sigma_i(x(t,\omega),u(t,\omega)), \Gamma_i(x(t,\omega),u(t,\omega)), B_i(x(t,\omega),u(t,\omega)))$ $\in$ $T_{(x(t,\omega),z(t,\omega),u(t,\omega))}S$ $\forall$ $i \in\{1,\hdots ,d\},$ where $S = h^{-1}(0)$ is a submanifold of $U\times\mathbb{R}^m\times V$. In other words,
\[ Img\begin{bmatrix}\sigma (x(t,\omega),u(t,\omega)) \\ \Gamma(x(t,\omega),u(t,\omega)) \\ B(x(t,\omega),u(t,\omega))\end{bmatrix}\subset T_{(x(t,\omega),z(t,\omega),u(t,\omega))}S.\]
Therefore, we can state the following lemma for SDAEs with noisy constraints.
\begin{lemma}
\label{lemma:noisy_to_noiseless}
Consider SDAE given in form of equation (\ref{eq:SDAE1}). Suppose there exists an Ito stochastic solution such that the algebraic variable is given by the following SDE
\[du = a(x,u)dt + B(x,u)dW_t.\]
If $g(x,u)$ is $\mathcal{C}^3$ and all the vector fields are $\mathcal{C}^1$, then
\[ Img\begin{bmatrix}\sigma (x(t,\omega),u(t,\omega)) \\ \Gamma(x(t,\omega),u(t,\omega)) \\ B(x(t,\omega),u(t,\omega))\end{bmatrix}\subset T_{(x(t,\omega),z(t,\omega),u(t,\omega))}S.\]
Here $S = h^{-1}(0)$ is a submanifold of $U \times \mathbb{R}^p\times V$, $h(x,z,u) = g(x,u)+ z$, and $z(t) = \int_0^t\Gamma(x(s), u(s))dW_s$.
\end{lemma}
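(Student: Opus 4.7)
My plan is to execute the suspension trick described in the paragraph preceding the statement and then replay the argument of Lemma \ref{lemma:subbundle} on the enlarged state. First I would set $k = (x,z,u) \in U \times \mathbb{R}^p \times V$ and rewrite (\ref{eq:SDAE1_new}) as the single Ito SDE
\begin{equation*}
dk = F(k)\,dt + \Sigma(k)\,dW_t, \qquad F = (f,0,a), \quad \Sigma_i = (\sigma_i,\Gamma_i,B_i).
\end{equation*}
Because $\mathcal{D}_u h = \mathcal{D}_u g$ remains invertible in a neighbourhood of the initial point (and, independently, $\mathcal{D}_z h = I_p$), the map $h$ is a submersion there, so by the regular value theorem $S = h^{-1}(0)$ is a $\mathcal{C}^3$ submanifold of $U \times \mathbb{R}^p \times V$ with $T_k S = \ker \mathcal{D}h(k)$.

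Next, since $z(0)=0$ by construction and the original algebraic equation (\ref{eq:SDAE-AE}) is assumed to hold along the solution, one has $h(k(t)) \equiv 0$. Applying Ito's formula to $h$ therefore forces both the bounded-variation part and the local martingale part of the resulting expansion to vanish identically. Writing the martingale part column-by-column gives, for each $i \in \{1,\dots,d\}$ and $P \otimes dt$-a.e.\ $(\omega,t)$,
\begin{equation*}
\mathcal{D}_x g(x,u)\,\sigma_i(x,u) + I_p\,\Gamma_i(x,u) + \mathcal{D}_u g(x,u)\,B_i(x,u) = 0,
\end{equation*}
which is exactly the statement that $(\sigma_i,\Gamma_i,B_i)(x(t,\omega),u(t,\omega)) \in \ker \mathcal{D}h(k(t,\omega)) = T_{k(t,\omega)}S$. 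Taking the span over $i$ delivers the asserted inclusion, and continuity of the coefficients and of the sample paths upgrades the almost-everywhere statement to one that holds at every $t$ with probability one.

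The conceptual obstacle, flagged by the authors just before the lemma, is the reason one cannot simply apply Lemma \ref{lemma:subbundle} as a black box: in the noiseless case initial conditions could be varied freely over $S$ and the inclusion promoted to a distribution on all of $S$, whereas here the suspended coordinate $z$ is forced to start at $0$. Thus only points actually visited by the trajectory are accessible, and the conclusion must be phrased pointwise along $(x(t,\omega),z(t,\omega),u(t,\omega))$ rather than as a global subbundle over $S$. Once that limitation is accepted, the remaining ingredients—submersion theorem, Ito's formula, and the identification of $\ker \mathcal{D}h$ with $TS$—are routine.
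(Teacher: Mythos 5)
Your argument is correct and is essentially the paper's own: the paper likewise introduces the suspended variable $z(t)=\int_0^t\Gamma\,dW_s$, observes that Lemma \ref{lemma:subbundle} cannot be invoked as a black box because $z(0)$ is pinned to $0$, and then reruns the Ito-differentiation/kernel-of-$\mathcal{D}h$ argument pointwise along the trajectory to obtain the stated inclusion. Your write-up is merely more explicit (e.g.\ noting that $\mathcal{D}_z h = I_p$ already makes $h$ a submersion), but no new idea or different route is involved.
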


\begin{corollary}
\label{corollary:subbundle_euclidean}
For completely high index SDAE given by equation \eqref{eq:SDAE2}
\begin{subequations}
\begin{equation}
dx = f(x,u)dt + \sigma(x,u)dW_t,
\tag{\ref{eq:SDAE2-SDE}}
\end{equation}
\begin{equation}
0 = g(x) + \int_0^t\Gamma(x_s)dW_s;
\tag{\ref{eq:SDAE2-AE}}
\end{equation}
\end{subequations}
let $g(x)$ be $\mathcal{C}^3$ and all the vector fields be $\mathcal{C}^1$.  Assume that there exists a solution for the SDAE. Then
\[ Img\begin{bmatrix}\sigma (x(t,\omega),u(t,\omega)) \\ \Gamma(x(t,\omega))\end{bmatrix}\subset T_{(x(t,\omega),z(t,\omega))}S.\]
Here $S = h^{-1}(0)$ is a submanifold of $U \times \mathbb{R}^p$, $h(x,z) = g(x)+ z$, and $z(t) = \int_0^t\Gamma(x(s))dW_s$.
\end{corollary}
\begin{definition}
Consider completely high index SDAEs that do not admit algebraic variable in the noise ($\sigma$ is not a function of $u$) and given by the following equation
\begin{subequations}
\label{equation:UNSDAE}
\begin{equation}
dx = f(x,u)dt + \sigma(x)dW_t,
\end{equation}
\begin{equation}
g(x) = -\int_0^t\Gamma(x(s))dW_s;
\end{equation}
\end{subequations}
where $x(t)\in\mathbb{R}^n$, $u(t)\in \mathbb{R}^m$, $W_t$ is a d-dimensional Weiner process, $f:U\times V\to\mathbb{R}^n$, $\sigma:U\to L(\mathbb{R}^d,\mathbb{R}^n)$, $g:U\to\mathbb{R}^p$, and $\Gamma:U\to L(\mathbb{R}^d,\mathbb{R}^p)$.

If $Img(\sigma(x),\Gamma(x))$ is not in $ker[D_xg(x)\quad  I_{p\times p}]$, then the SDAE \eqref{equation:UNSDAE} will be called an \textbf{ill-posed SDAE}.
\end{definition}
\begin{corollary}
\label{corollary:no_sol_to_UNSAE}
Consider an ill-posed SDAE in form of equation \eqref{equation:UNSDAE} such that $g(x)$ is $\mathcal{C}^3$ and all the other vector fields are $\mathcal{C}^1$. Then there is no solution for equation \eqref{equation:UNSDAE}.
\end{corollary}
\begin{proof}
Apply contrapositive statement of corollary \ref{corollary:subbundle_euclidean} to UNSDAE.
\end{proof}

\section{Approximate Ito stochastic solution for completely high index SDAEs}
In section \ref{sec:preliminary results} we have observed that, if the SDAE is driven by a $d$-dimensional Wiener process and if the relation between the number of algebraic variables ($m$) and the number of algebraic equations ($p$)  is given by $m = p(1+d)^{J-1}$ for some positive integer $J$, then it is possible to find exact solution. However for completely high index SDAEs if $m = p$, then we have observed that we have no traditional ways of obtaining an exact solution. In section \ref{sec:necessary condition}, using the necessary condition we have proved the existence of a class of SDAEs for which there is no solution. We have called these equations as ill-posed SDAEs. From an application point of view, it is always desirable to have solution for ill-posed problems as well. However, the existence of the ill-posed SDAEs makes it challenging to develop a general technique for solving all types of completely high index SDAE. In this section, we develop some techniques to find an approximate Ito stochastic solution for all types of completely high index SDAEs. 
\subsection{Approximate solution with unit probability}
\label{sec:gen_ito}
Consider the generic form of completely high index SDAE given by equation \eqref{eq:SDAE2}
\begin{subequations}
\begin{equation}
dx = f(x,u)dt + \sigma(x,u)dW_t,
\tag{\ref{eq:SDAE2-SDE}}
\end{equation}
\begin{equation}
0 = g(x) + \int_0^t\Gamma(x(s))dW_s;
\tag{\ref{eq:SDAE2-AE}}
\end{equation}
\end{subequations}
with state space $U = \mathbb{R}^n$, and algebraic space $V = \mathbb{R}^m$. As observed in the discussion preceding lemma \ref{lemma:noisy_to_noiseless}, an SDAE with noisy constraint can be expressed as another SDAE with noiseless constraint. For this reason $\Gamma = 0$ can be considered without loss of generality. Therefore the equation under consideration becomes
	\begin{subequations}
	\label{eq:SDAE2_genito}
	\begin{equation}
	dx = f(x,u)dt + \sigma(x,u)dW_t,  
	\end{equation}
	\begin{equation}
    0 = g(x);
	\end{equation}
	\end{subequations}
	with $x(t)\in \mathbb{R}^n$, $u(t)\in \mathbb{R}^m$, $W_t$ as d-dimensional Wiener process, $f:\mathbb{R}^n\times \mathbb{R}^m\to \mathbb{R}^n$, $\sigma:\mathbb{R}^n\times \mathbb{R}^m\to L(\mathbb{R}^d,\mathbb{R}^n)$, and $g:\mathbb{R}^n\to \mathbb{R}^p$. 
\begin{definition}
If, for some $\epsilon>0$, there exists an Ito stochastic process given by $du(t,\omega) = a(x,u) dt + B(x,u)dW_t$ such that the solution of the differential equation \[d\left[\begin{matrix}x\\ u\end{matrix}\right] = \left[\begin{matrix}f(x,u)\\ a(x,u)\end{matrix}\right]dt + \left[\begin{matrix}\sigma(x,u) \\ B(x,u) \end{matrix}\right]dW_t,\]
satisfies $||g(x(t))||<\epsilon$ then the corresponding stochastic process $(x(t),u(t))$ is said to be an \textbf{approximate solution with unit probability} for $\epsilon$.
\end{definition}
With this definition, the problem of finding approximate solution boils down to finding $a(x,u)$ and $B(x,u)$ for given $\epsilon>0$.\\ Suppose there exists a function $y:\mathbb{R}\times\mathbb{R}^m\to\mathbb{R}^n$ such that
\[y(t,u(t)) = x(t).\] If $a(x,u) = 0$ and $B(x,u) = 0$ then $u(t)$ will be constant $u(t) = u_0 = v$. However, as $y(t,v) = x(t)$,  $y(t,v)$ is still stochastic.  This means that when $a(x,u) \neq 0$ and $B(x,u)\neq 0$, $y(t,u(t)) = x(t)$ is obtained by composition of the stochastic process $u(t)$ into the stochastic process $y(t,\cdot)$. Finding differential evolution of such composition is not a straight forward application of Ito differentiation. However, \textit{extended Ito formula} from \cite{kunita1981some} can be used. Extended Ito formula can be interpreted as the stochastic analogue of the method of characteristics, which is used in Partial Differential Equations (PDEs). The formula is also known by other names such as \textit{generalized Ito formula or Ito-Wentzell's formula}. Using this formula,
\begin{multline}
d(y_i(t,u(t))) = dM^i_t(u(t)) + \left(\partial_{v_j}y_i(t,u(t))a_j(y(t,u(t)),u(t))\right)dt \\ +\left( \dfrac{1}{2}B_{km}(\partial^2_{v_jv_k}y_i(t,u(t)))B_{jm}\right) dt+d\left[\dfrac{\partial M_t^i}{\partial v_l},du_l\right]\\
+ \left(\partial_{v_k}y_i(t,u(t))B_{kj}(y(t,u(t)),u(t))\right) dW^j_t;
\end{multline}
where $M_t(v) = y(t,v)$ and $dM_t(v) = dy(t,v)$ for some constant $v\in \mathbb{R}^m$.\\
As $x(t) = y(t,u(t))$, it can be verified that if
\begin{multline}
M^i_t(v) = y_i(t,v) = y_i(0,v)+\int_0^tf_i(y(s,v),v) - \partial_{v_j}y_i(s,v)a_j(y(s,v),v)ds\\
-\int_0^t \dfrac{1}{2}B_{km}(y(s,v),v)(\partial^2_{v_jv_k}y_i(s,v))B_{jm}(y(s,v),v))ds-\int_0^t \partial_{v_l}\Lambda_{ij}(s,v)B_{lj}(y(s,v),v)ds\\ + \int_0^t\Lambda_{ij}(s,v) dW^j_s,
\end{multline}
where
\[\Lambda_{ij}(t,v) = \sigma_{ij}(y(t,v),v) - \partial_{v_k}y_i(t,v)B_{kj}(y(t,v),v),\]
then
\[y(t,u(t)) = x(t) = x(0) + \int_0^tf ds + \int_0^t\sigma dW_s.\]
On Ito-differentiating the algebraic equation, keeping $v$ constant,
\begin{equation}
\label{eq:gen_Ito_g_evolv}
0= g(y(t,v)) = g(y(0,v)) + \int_0^t\left[( Dg)F+\dfrac{1}{2}Tr(\Lambda^T ( D^2g)\Lambda)\right]ds+ \int_0^t\left[( Dg)\Lambda\right]dW_s,
\end{equation}
where
\[F_i = f_i - (\partial_{v_j}y_i)a_j-\dfrac{1}{2}B_{km}(\partial^2_{v_jv_k}y_i)B_{jm}-(\partial_{v_l}\Lambda_{ij})B_{lj}\] and
\[\Lambda = \sigma -  D_vy B.\]
\begin{proposition}
Consider equation \eqref{eq:SDAE2_genito} that is given as
	\begin{subequations}
	\begin{equation}
	dx = f(x,u)dt + \sigma(x,u)dW_t,
	\tag{\ref{eq:SDAE2_genito}a}
	\end{equation}
	\begin{equation}
    0 = g(x);
    \tag{\ref{eq:SDAE2_genito}b}
	\end{equation}
	\end{subequations}
	with $x(t)\in \mathbb{R}^n$, $u(t)\in \mathbb{R}^m$, $W_t$ as d-dimensional Wiener process, $f:\mathbb{R}^n\times \mathbb{R}^m\to \mathbb{R}^n$, $\sigma:\mathbb{R}^n\times \mathbb{R}^m\to L(\mathbb{R}^d,\mathbb{R}^n)$, and $g:\mathbb{R}^n\to \mathbb{R}^p$. Let $z_t:\mathbb{R}^m\to \mathbb{R}^p$ such that
\[z_t(v) =g(y(t,v)).\]
Assume that $z_0(v)$ is invertible for all $v\in\mathbb{R}^m$. Let $\epsilon>0$ and
\begin{equation}
\label{equation:gen_ito_solution}
\begin{aligned}
B = (( Dg) D_vy)^{-1}(( Dg)\sigma), \\
a = (( Dg) D_vy)^{-1}w;
\end{aligned}
\end{equation}
where
\[w_i =(\partial_{x_u}g_i) (f_u -\dfrac{1}{2}B_{km}(\partial^2_{v_jv_k}y_u)B_{jm}-(\partial_{v_l}\Lambda_{uj})B_{lj})+ \dfrac{1}{2}\Lambda_{kj} (\partial^2_{x_k x_l}g_i)\Lambda_{lj}\] and
\[\Lambda = \sigma -  D_vy B.\]
If $y(t,v)$ is chosen such that $||z_0(v)||<\epsilon$ for all $v\in\mathbb{R}^m$, then the solution for
\[d\left[\begin{matrix}x\\ u\end{matrix}\right] = \left[\begin{matrix}f(x,u)\\ a(x,u)\end{matrix}\right]dt + \left[\begin{matrix}\sigma(x,u) \\ B(x,u) \end{matrix}\right]dW_t\]
is the local approximate solution with unit probability for given $\epsilon$.
\end{proposition}
\begin{proof}
Equation \eqref{eq:gen_Ito_g_evolv} is a stochastic PDE representing the dynamics of $z_t(v) =g(y(t,v))$.
Now, if $y(0,v)$ is chosen such that $||z_0(v)||<\epsilon$ for all $v\in\mathbb{R}^m$, then $a$ and $B$ can be estimated from equation \eqref{eq:gen_Ito_g_evolv} such that $dz_t(v) = 0$.
With $B = (( Dg) D_vy)^{-1}(( Dg)\sigma)$, and $a = (( Dg) D_vy)^{-1}w$; where
\[w_i =(\partial_{x_u}g_i) (f_u -\dfrac{1}{2}B_{km}(\partial^2_{v_jv_k}y_u)B_{jm}-(\partial_{v_l}\Lambda_{uj})B_{lj})+ \dfrac{1}{2}\Lambda_{kj} (\partial^2_{x_k x_l}g_i)\Lambda_{lj}\] and
\[\Lambda = \sigma -  D_vy B,\] the drift and the martingale part of equation \eqref{eq:gen_Ito_g_evolv} becomes zero. Hence, the dynamics of $z_t(v)$ is frozen in time i.e. $z_0(v) = z_t(v)\forall t\in I$, where $I$ is some interval of existence. Hence, one can simply talk about the function $z_0(v)$ instead of $z_t(v)$. From equation (\ref{eq:gen_Ito_g_evolv}) it is clear that estimation of $a$ and $B$ requires $ Dz_0(v)$ to be invertible (and it is already given that  $Dz_0(v)$ is invertible). Therefore, the number of algebraic variables should be equal to number of algebraic equations, i.e. $m = p$. Since, we have chosen $y(0,v)$ is chosen such that $||z_0(v)||<\epsilon$ for all $v\in\mathbb{R}^m$, the frozen dynamics given by $dz_t(v) = 0$ implies that $||z_0(u(t))|| = ||z_t(u(t))|| = ||g(y(t,u(t)))|| = ||g(x(t))||<\epsilon$ $\forall t\in I$, where $I$ is the interval of existence for
\[d(x,u) = (f(x,u),a(x,u)) dt + (\sigma(x),B(x,u))dW_t.\]
This is nothing but approximate solution with unit probability for $\epsilon$.
\end{proof}

\begin{remark} Hence, if $\epsilon\to 0$, then $||z_0(u(t))|| \to 0$ $\forall t\in I$. But, from corollary \ref{corollary:no_sol_to_UNSAE}, we know that there is no solution for ill-posed equations. Therefore, we can expect the solution to blow off. Indeed, this can be confirmed from the fact that as $\epsilon\to 0$, $|| Dz_0(v)||\to 0$ $\forall v\in \mathbb{R}^m$ and hence the invertibility of $Dz_0(v)$ is lost, which makes $B$ and $a$ to not exist, which makes the SDE non integrable. So, using an Ito stochastic process as candidate solution for ill-posed equations, we can satisfy the constraint as close as we want, but not exactly. In other words, for every $\epsilon >0$ there exists a stochastic process, $du = a(x,u) dt + B(x,u) dW_t$ with $a$ and $B$ given by equation (\ref{equation:gen_ito_solution}), such that $||g(x(t))||<\epsilon$.
\end{remark}
Based on this discussion, the steps for computation are summarized in table \ref{alg:gen-ito}. The first step in table \ref{alg:gen-ito}, requires one to choose the function $y:\mathbb{R}^+\times\mathbb{R}^m\to \mathbb{R}^n$ such that $||g(y(0,u))||<\epsilon$. However, obtaining such a function is a difficult task and relies on trial and error. Alternatively, since the function $g$ is Lipschitz continuous, it may be possible to control the bound of $||g(y(0,u))||$ by controlling the bound on $||y(0,u)||$. Then a heuristic algorithm can be designed to ensure $||g(y(0,u))||<\epsilon$. We do not explore this in this thesis.
\begin{table}[h!]
\hrule
\vspace{0.5em}
\caption{Steps for converting completely high index SDAE to an SDE, whose solution satisfies $||g(x(t))||<\epsilon$.}
\label{alg:gen-ito}
\hrule
\begin{algorithmic}[1]
\STATE{Given $\epsilon>0$, choose $y:\mathbb{R}^+\times\mathbb{R}^m\to \mathbb{R}^n$ such that $||g(y(0,u))||<\epsilon$ and $( Dg D_vy)$ is invertible.}
\STATE{$B = (( Dg) D_vy)^{-1}(( Dg)\sigma)$}
\STATE{$\Lambda = \sigma -  D_vy B$}
\STATE{$\chi_i =(\partial_{x_w}g_i)(f_w -0.5B_{km}(\partial^2_{v_jv_k}y_w)B_{jm}-(\partial_{v_l}\Lambda_{wj})B_{lj})B_{lj})+ 0.5 \Lambda_{km}(\partial^2_{x_jx_k}g_i)\Lambda_{jm}$}
\STATE{$a = (( Dg) D_vy)^{-1}\chi$}
\STATE{Use Netwon's iteration to solve for $u_0$ in $g(y(0,u_0)) = 0$.}
\STATE{$d(x,u) = (f,a)dt + (\sigma,B)dW_t$ is the required SDE with the initial condition $(x(0),u_0)$.}
\end{algorithmic}
\hrule
\end{table}
\begin{remark}
Appropriate choice of the function $y(0,v)$ allows us to decide the interval of time upto which the solution is numerically trackable.
\end{remark}
\subsubsection{Example}
It is clear from the previous discussion that in case of completely high index equation, this method gives an SDE whose solution is the approximate solution for the SDAE. However, depending on the original SDE of the SDAE and the choice of the function $y(t,v)$, the SDE obtained from the method may or may not satisfy the growth bound condition for the SDE (refer to \cite{oksendal2003stochastic} for details on growth bound condition of an SDE). In this section we illustrate the method by the following SDAE.
\begin{subequations}
\begin{equation}
\label{equation:example-nonauto-SDE-UNSDAE}
dx = (x+x^2 + u) dt + 0.2 dB_t,
\end{equation}
\begin{equation}
\label{equation:example-nonauto-algebraic-eq-UNSDAE}
g(x,t) = 2x - x^3 - 0.5sin(4t) = 0,
\end{equation}
\end{subequations}
where, $g(x,t)$ is the algebraic equation, $u$ is the algebraic variable, $B_t$ is a 1-dimensional Wiener process, and $x(0) = 0$ is the initial condition for the SDE. Figure \ref{fig:UNSDAE-wto-u} shows $x(t)$ vs. $t$ plot for the unconstrained SDE (\ref{equation:example-nonauto-SDE-UNSDAE}) and the constraint equation.
\begin{figure}[h]
\includegraphics[width=\linewidth]{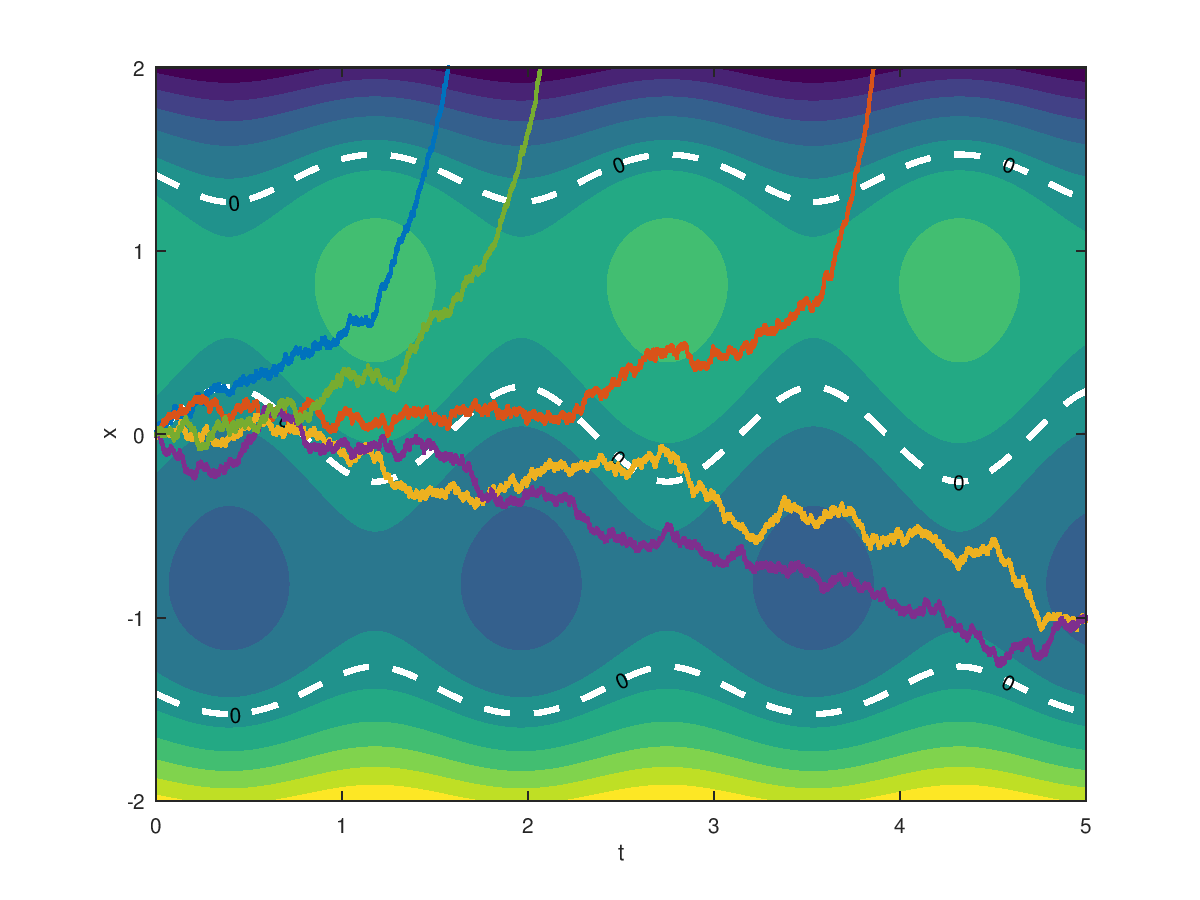}
\caption{$x(t)$ vs. $t$ plot for the unconstrained SDE (\ref{equation:example-nonauto-SDE-UNSDAE}). The white dashed line indicates the algebraic equation (\ref{equation:example-nonauto-algebraic-eq-UNSDAE}) and the colored lines show $x(t)$ vs. $t$ plot for different realizations. The numerical integration was performed using Euler-Maruyama method with time stepping of $10^{-4}$.}
\label{fig:UNSDAE-wto-u}
\end{figure}
The SDAE is a non autonomous completely high index SDAE. This SDAE can be converted into the following equivalent autonomous SDAE.
\begin{subequations}
\label{equation:auto-UNSDAE-example}
\begin{equation}
d \begin{bmatrix} x_1\\ x_2 \end{bmatrix} = \begin{bmatrix} x_1 + x_{1}^{2} + u\\ 1 \end{bmatrix} dt + \begin{bmatrix} 0.2 \quad 0\\ 0\quad 0 \end{bmatrix} dW_t,
\end{equation}
\begin{equation}
g\left(\begin{bmatrix}
x_1 \\ x_2
\end{bmatrix}\right) = 2x_1 -x_1^3 - 0.5sin(4x_2) = 0,
\end{equation}
\end{subequations}
where $W_t$ is a 2-dimensional Wiener process, and $(x_1(0),x_2(0)) = (0,0)$ is the initial condition for the SDE.

This gives an autonomous completely high index SDAE, in which the noise does not admit algebraic variable. So, by definition, the SDAE is an UNSDAE. Now, $Dg(x_1,x_2) = \begin{bmatrix}2 - 3x_1^2\quad -2cos(4x_2) \end{bmatrix}$ and $Img\left(\begin{bmatrix} 0.2 \quad 0\\ 0\quad 0 \end{bmatrix}\right) = \mathbb{R}\times\{0\}$.

Clearly, $\mathbb{R}\times\{0\}$ is not in $ker( Dg(x_1,x_2))$. Hence, from corollary \ref{corollary:no_sol_to_UNSAE}, there is no Ito stochastic solution for the given UNSDAE.

The first step in table \ref{alg:gen-ito} is to choose the function $y(0,u)$. If $y(0,u) = \begin{bmatrix}
-\dfrac{\epsilon}{4\pi}\tan^{-1}u \\ 0
\end{bmatrix}$ then for any $\epsilon<1$, $||g\left(\begin{bmatrix}
-\dfrac{\epsilon}{4\pi}\tan^{-1}u \\ 0
\end{bmatrix}\right)||<\epsilon$. Therefore, let $\begin{bmatrix} y_1\\ y_2 \end{bmatrix} = \begin{bmatrix} -\dfrac{\epsilon}{4\pi}\tan^{-1}u \\ 0 \end{bmatrix}$. With this, according to the steps in table \ref{alg:gen-ito},
$B = \left[ 0.2(\dfrac{\partial y_1}{\partial u})^{-1}\quad 0\right] = \left[\dfrac{-0.8\pi (1+u^2)}{\epsilon}\quad 0\right]$, and
\[a = \left(\dfrac{\partial y_1}{\partial u}\right)^{-1}\left(f -0.5B^2\dfrac{\partial^2 y_1}{\partial u^2}\right) + \left(\dfrac{\partial g}{\partial x_1}\dfrac{\partial y_1}{\partial u}\right)^{-1}\dfrac{\partial g}{\partial x_2}\]
Therefore,
\begin{equation}
\label{equation:UNSDAEtoSDE-gen-ito-U}
d \begin{bmatrix} x_1\\ x_2 \\ u\end{bmatrix} = \begin{bmatrix} x_1 + x_{1}^{2} + u\\ 1\\a\end{bmatrix} dt +\begin{bmatrix} 0.2 \quad\quad\quad\quad 0\\ 0\quad\quad\quad\quad 0 \\ -\dfrac{0.8\pi}{\epsilon}(1+u^2) \quad 0\end{bmatrix} dW_t
\end{equation}
is the required SDE with initial condition $(x_1(0),x_2(0),u(0)) = (0,0,0)$.
Now the resulting SDE (\ref{equation:UNSDAEtoSDE-gen-ito-U}) can be easily solved with the given initial condition.  Figure \ref{fig:UNSDAE-genito} shows $x_1$ vs. $x_2$ plot for the SDE (\ref{equation:UNSDAEtoSDE-gen-ito-U}). The numerical integration is performed using Euler-Maruyama method with time stepping of $10^{-6}$. However, this may only give a local solution as the growth bound condition is not checked.

\begin{figure}[h]
\includegraphics[width=\linewidth]{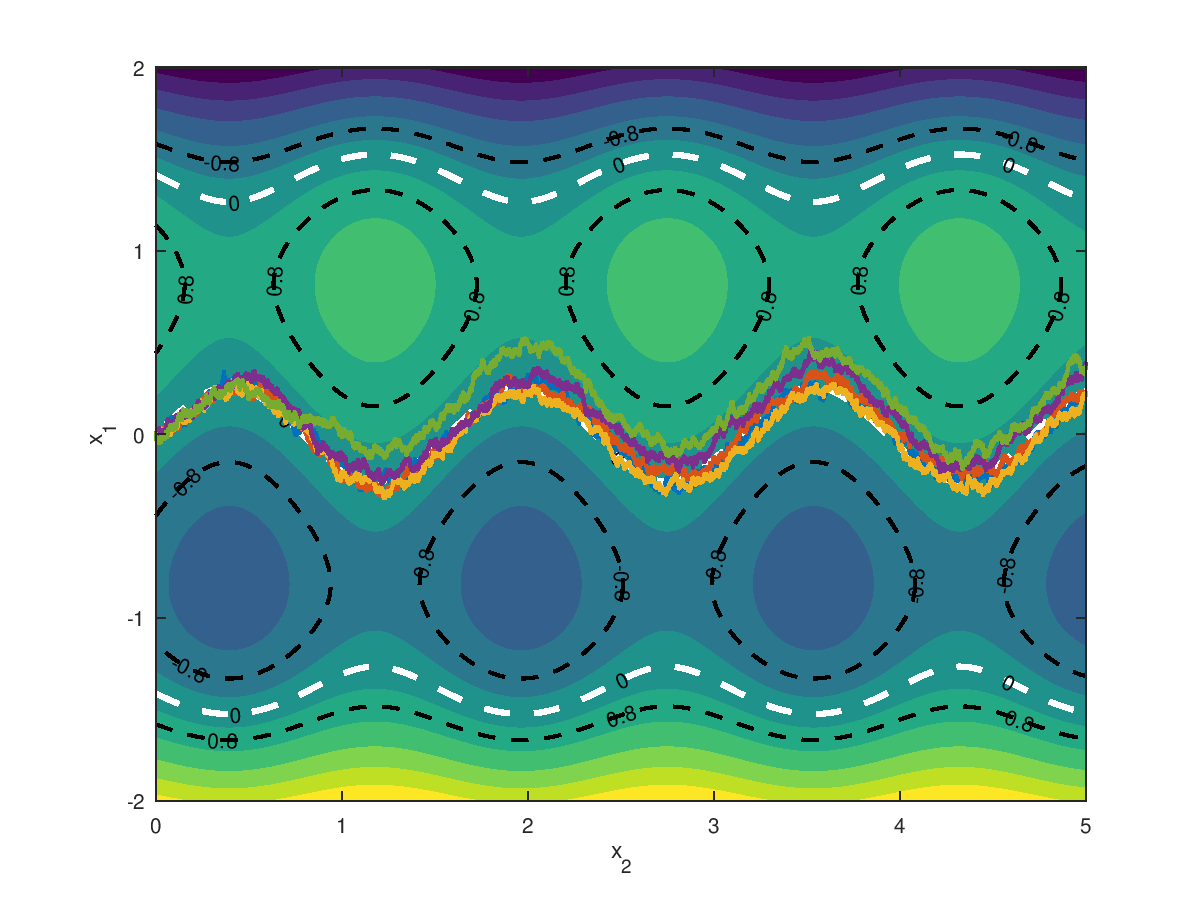}
\caption{$x_1$ vs. $x_2$ plot for the constrained SDE (\ref{equation:UNSDAEtoSDE-gen-ito-U}). The white dashed line indicates the algebraic constraint, black dashed lines show the bound on the constraint, and the colored lines show $x_1$ vs. $x_2$ plot for different realizations.}
\label{fig:UNSDAE-genito}
\end{figure}

\subsection{Approximate solution with bounded probability}
\label{sec:bounded_ito}
The method proposed in section \ref{sec:gen_ito} gives an approximate solution such that the constraint is satisfied with probability 1. In this section we use the ideas of stable control of stochastic differential equation and apply it to the SDAE setting. From the control theory view point, these ideas have already been explored earlier; e.g. in \cite{1967stochastic}, wherein the SDE is constrained to lie within a subset upto a probability bound. The key idea in \cite{1967stochastic} is to choose a Lyapunov function to obtain stable control solution so that SDE is constrained to lie on the constraint set. In this section, we use this idea in context of SDAEs by considering the set to be an $\epsilon$-neighbourhood of the constraint function. We present steps to find an approximate solution such that the probability that the approximate solution violates the constraint bound is bounded. For simplicity we will consider equation (\ref{eq:SDAE2})
\begin{subequations}
\begin{equation}
dx = f(x,u)dt + \sigma(x,u)dW_t,
\tag{\ref{eq:SDAE2-SDE}}
\end{equation}
\begin{equation}
0 = g(x) + \int_0^t\Gamma(x(s))dW_s;
\tag{\ref{eq:SDAE2-AE}}
\end{equation}
\end{subequations}
with $U = \mathbb{R}^n$ and $V = \mathbb{R}^m$.
\begin{definition}
Let $\lambda(t) = g(x(t)) + \int_0^t\Gamma(x(s))dW_s$ in equation (\ref{eq:SDAE2}). We say that a completely high index SDAE \eqref{eq:SDAE2} has a mean solution, or simply \textbf{{m-solution}}, if there exists $(x(t),u(t)) \in \mathbb{R}^n\times \mathbb{R}^m$ such that $\mathbb{E}(\lambda(t)) = 0$.
\end{definition}
\noindent Ito differentiation of the algebraic equation gives, $d\lambda = h(x,u)dt + ( D_xg(x)\sigma(x,u) + \Gamma(x))dW_t$, where $h(x,u)= ( D_x(g)(x))f(x,u) + \dfrac{1}{2}Tr(\sigma^T(x,u) ( D^2_{x,x}(g)(x))\sigma(x,u))$. If $h(x_t,u_t) = 0$ and $\lambda(0) = 0$, then the SDAE has a local m-solution. This is summarized in the following proposition.
\begin{proposition}
Consider completely high index SDAE (\ref{eq:SDAE2}) such that $f(x,u)$, $\sigma(x,u)$ and $\Gamma(x)$ are $\mathcal{C}^1$, $g(x)$ is $\mathcal{C}^3$. For some initial condition $u_0\in \mathbb{R}^m$ and $x_0\in \mathbb{R}^n$, assume that $h(x_0,u_0) = 0$ and $ D_uh(x_0,u_0)$ is an isomorphism, where $h:\mathbb{R}^n\times \mathbb{R}^m \to \mathbb{R}^m$ such that
\[h(x,u)= ( D_x(g)(x))f(x,u) + 0.5Tr(\sigma^T(x,u) ( D^2_{x,x}(g)(x))\sigma(x,u)).\]
Then there exists a local m-solution for the SDAE.
\end{proposition}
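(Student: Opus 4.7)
The plan is to enforce pointwise the zero-drift condition that arises when one Ito-differentiates $g(x(t))$, and then reduce the resulting constrained problem to an ordinary SDE via the implicit function theorem. First, I would apply Ito's formula to $g(x(t))$ along any process satisfying $dx = f(x,u)dt + \sigma(x,u)dW_t$; this yields
\[ g(x(t)) = g(x_0) + \int_0^t h(x(s),u(s))\,ds + \int_0^t \mathcal{D}_xg(x(s))\,\sigma(x(s),u(s))\,dW_s, \]
so that, taking expectations (after a standard localization if needed),
\[ \mathbb{E}[g(x(t))] = g(x_0) + \int_0^t \mathbb{E}[h(x(s),u(s))]\,ds. \]
Since the SDAE's constraint at $t=0$ forces $g(x_0)=0$, I will have produced an m-solution as soon as I can exhibit an adapted pair $(x(t),u(t))$ with $h(x(t),u(t))=0$ almost surely for $t$ in some interval $[0,a)$.

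Second, the hypotheses $h(x_0,u_0)=0$ and $\mathcal{D}_u h(x_0,u_0)$ an isomorphism let me invoke the implicit function theorem on the $\mathcal{C}^1$ map $h$: there will exist open neighbourhoods $U_0\subset U$ of $x_0$ and $V_0\subset V$ of $u_0$, together with a $\mathcal{C}^1$ map $\psi\colon U_0\to V_0$ satisfying $\psi(x_0)=u_0$ and $h(x,\psi(x))\equiv 0$ on $U_0$. Substituting $u=\psi(x)$ into the SDE gives the unconstrained SDE
\[ dx = f(x,\psi(x))\,dt + \sigma(x,\psi(x))\,dW_t, \]
whose coefficients are locally Lipschitz because $f$, $\sigma$ and $\psi$ are $\mathcal{C}^1$. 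Standard local existence for SDEs then furnishes a continuous $\mathfrak{F}_t$-adapted solution $x(t)$ up to the first exit time $\tau$ from $U_0$, and I set $u(t)=\psi(x(t))$. Since $\tau>0$ almost surely, there exists a deterministic $a>0$ so that the pair $(x(t),u(t))$ lives in $U_0\times V_0$ on $[0,a)$ with positive probability arbitrarily close to $1$, and after a further stopping argument one obtains the desired local solution in the sense of the paper's definition.

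By construction $h(x(t),u(t))=0$ on the interval of existence, so the expectation identity above collapses to $\mathbb{E}[g(x(t))] = g(x_0) = 0$, which is exactly the local m-solution property. The hardest part will be the regularity bookkeeping rather than any deep analytic step: a direct appeal to Lemma \ref{lem:SDEforSDAE} applied to the auxiliary index-1 SDAE with algebraic equation $h(x,u)=0$ would nominally demand that $h$ be $\mathcal{C}^3$, which is not guaranteed by the $\mathcal{C}^1$ hypothesis on $f$ and $\sigma$, so the implicit-function/plug-in route above is preferable and cleaner. One also implicitly needs $g(x_0)=0$ for the m-solution condition to hold at $t=0$; this is forced by evaluating the SDAE's algebraic equation at the initial time.
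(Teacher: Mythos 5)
Your proposal is correct and follows essentially the same route as the paper: Ito-differentiate the constraint to identify $h$ as the drift of $g(x(t))$ (resp.\ of $\lambda(t)=g(x(t))+\int_0^t\Gamma\,dW_s$, whose martingale part has zero mean either way), take expectations via Fubini/localization, and use the implicit function theorem at $(x_0,u_0)$ to obtain $u=\psi(x)$ annihilating the drift. Your version merely spells out the final step (substituting $\psi$ into the SDE and invoking local existence for the resulting locally Lipschitz SDE) that the paper compresses into ``implicit function theorem completes the proof.''
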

\begin{proof}
We know that \[\mathbb{E}(\lambda(t)) = \mathbb{E}\int_0^t h(x(x),u(x)) ds.\]
If $h(x_0,u_0) = 0$ and $D_uh(x_0,u_0)$ is an isomorphism, then implicit function theorem completes the proof.
\end{proof}
\noindent Even if an m-solution can be obtained, since the constraint is satisfied only in the mean sense, we do not have control over noise. Ideally, we would like to have a solution that is not just m-solution but also reduces the variance.
\begin{definition}
Let $\lambda(t) = g(x(t)) + \int_0^t\Gamma(x(s))dW_s$ for completely high index SDAE (\ref{eq:SDAE2}). By $\epsilon$-{\textbf{bounded m-solution}} with probability $\alpha$, we mean that there exists an m-solution $(x(t),u(t))\in \mathbb{R}^n\times \mathbb{R}^m$ for $t\in (0,a]$, such that $\mathbb{P}(\sup_{s\in (0,a]}||\lambda(s)||>\epsilon)\leq \alpha$ for some $\alpha\in (0,1]$ and $\epsilon >0$.
\end{definition}
\begin{proposition}
\label{proposition:bounded-m-solution}
Consider completely high index SDAE (\ref{eq:SDAE2}) such that $f(x,u)$, $\sigma(x,u)$ and $\Gamma(x)$ are $\mathcal{C}^1$, $g(x)$ is $\mathcal{C}^3$. Let $h:\mathbb{R}^n\times\mathbb{R}^m \to \mathbb{R}^m$ such that \[h(x,u)= ( D_x(g)(x))f(x,u) + \dfrac{1}{2}Tr(\sigma^T(x,u) ( D^2_{x,x}(g)(x))\sigma(x,u)).\]
Suppose there exists semi-martingale $(x_t,u_t)\in \mathbb{R}^n\times\mathbb{R}^m$ that it not only satisfies the SDE (\ref{eq:SDAE2-SDE}) but also satisfies \[h(x_t,u_t) = -b\lambda(t)\] in some interval of existence $(0,a]$, where $b>0$, \[\lambda(t) = g(x(t)) + \int_0^t\Gamma(x(s))dW_s,\] \[U\times V = \{(x_t(\omega),u_t(\omega))\in \mathbb{R}^n\times\mathbb{R}^m| t\in (0,a], \omega\in \Omega\}\] with $\Omega$ as the sample space, and $\sup_{(x,u)\in U\times V} ||( D_x(g)(x))\sigma(x,u) +\Gamma(x,u))||^2  <\infty$. Then $(x_t,u_t)$ is a local m-solution for equation (\ref{eq:SDAE2}). Moreover, for every $\epsilon>0$ and $\alpha\in (0,1]$, $\exists$ $b>0$ such that  ${\mathtt{P}(\sup_{s\in (0,t]}||\lambda(s)||>\epsilon)\leq \alpha}$ for all $t\in (0,a]$.
\end{proposition}
\begin{proof}
The proof is based on the conventional proof for stability of SDEs.
This idea has been discussed in \cite{1967stochastic}. Our solution can be interpreted as a special case of the Lyapunov stable solution from \cite{1967stochastic}, obtained using a quadratic Lyapunov function.

Following the proof of existence of m-solution, as $h(x,u)$ is $\mathcal{C}^1$ and $ D_uh(x_0,u_0)$ is an isomorphism, by implicit function theorem locally there exists $k(x,w)$ such that $h(x,k(x,w)) = w$. With $u_t = k(x(t),-b\lambda(t))$ we get,
\[d\lambda = -b\lambda dt + ( D_xg(x)\sigma(x,u) + \Gamma(x))dW_t.\]
As $\lambda(0) = 0,$
\[\lambda(t) =  e^{-bt}\int_0^te^{bs}( Dg\sigma + \Gamma) dW_s.\]
We know that $\lambda(t)e^{bt}$ is a martingale. Hence, $\mathtt{E}(\lambda(t)) = 0$ means that the solution is an m-solution. Furthermore, using Doob's martingale inequality, \[\mathtt{P}(\sup_{s\in (0,t]}||\lambda(s) e^{bs}||>k) \leq \dfrac{\mathtt{E}(\lambda^T(t)\lambda (t) e^{2bt})}{k^2}.\]
Using Ito-isometry property,
\[\mathtt{E}(\lambda^T\lambda e^{2bt}) = \mathtt{E}\int_0^te^{2bs} Tr(AA^T) ds,\]
where $A =  Dg\sigma + \Gamma$. Therefore,
\[\mathtt{P}(\sup_{s\in (0,t]}||\lambda(s) e^{bs}||>k)\leq \dfrac{\mathtt{E}\int_0^te^{2bs} Tr(AA^T) ds}{k^2}\leq \dfrac{J(e^{2bt} -1)}{2bk^2},\] where $J = \sup_{x\in U}\{Tr(AA^T)\}<\infty$, and $k>0$. If $k = e^{bt}\epsilon$, then 
\[\mathtt{P}(\sup_{s\in (0,t]}||\lambda(s) e^{bs}||>k) = \mathtt{P}\left(\sup_{s\in (0,t]}||\lambda(s)||>\epsilon\right)\]\[ \leq\dfrac{J(1-e^{-2bt})}{2b\epsilon^2} \leq \dfrac{J}{2b\epsilon^2}.\]
Therefore, if ${b = \dfrac{J}{2\epsilon^2\alpha}}$, then ${\mathtt{P}(\sup_{s\in (0,t]}||\lambda(s)||>\epsilon)\leq \alpha}$ i.e. given ${\alpha\in (0,1]}$ and ${\epsilon>0 \: \exists\: b>0}$ such that ${\mathtt{P}(\sup_{s\in (0,t]}||\lambda(s)||>\epsilon)\leq \alpha}$, with $d\lambda = -b\lambda dt + ( D_xg(x)\sigma(x,u) + \Gamma(x))dW_t$ and $\lambda(0) = 0$.
\end{proof}
\begin{remark}
From the engineering view point of the control theory, the bounded m-solution results in a proportional type control solution in the mean sense. In this context the value of $b$ can be interpreted as the proportional gain. As $\alpha \to 0$, $b\to \infty$. This indicates that it is not possible to eliminate noise by controlling the drift term. Similarly, if $\epsilon \to 0$ then $b \to \infty$. This confirms the fact that in case of ill-posed equations, there is no path-wise solution.
\end{remark}
The proof of proposition \ref{proposition:bounded-m-solution} relies on implicit function theorem to ensure that $h(x_t,u_t) = -b\lambda(t)$. Hence, numerical computation of bounded m-solution will involve the Newton's method of root finding. However, performing newton's iteration at each time step can be computationally inefficient. If a completely high index SDAE can be reduced to an SDAE of index 1 such that the solution of the two SDAEs agree approximately, then the method of lemma \ref{lem:SDEforSDAE} can be used (provided required conditions of the lemma are satisfied). In table \ref{alg:bounded-sol} we combine this idea with proposition \ref{proposition:bounded-m-solution}, and gives the steps to convert a given UNSADE to an index 1 SDAE.
\vspace{0.5em}
\begin{table}[h!]
\hrule
\vspace{0.5em}
\caption{Steps for converting the given high index SDAE into an index 1 SDAE to compute bounded m-solution.}
\label{alg:bounded-sol}
\vspace{0.5em}
\hrule
\begin{algorithmic}[1]
\STATE{$A(x) = ( Dg(x))\sigma(x)$}
\STATE{$J = \sup_{x\in U}\{Tr(A(x)A^T(x))\}$}
\STATE{Choose $b\in\mathbb{R}$ such that ${b > \dfrac{J}{2\epsilon^2\alpha}}$}.
\STATE{$h(x,u) = ( Dg(x))f(x,u) + \dfrac{Tr}{2}(\sigma^T(x)( D^2g(x))\sigma(x)) + b(g(x))$}
\STATE{If the given SDE with $h(x,u) = 0$ as the constraint make an index 1 SDAE, then it can be solved trivially.}
\end{algorithmic}
\hrule
\end{table}

\subsubsection{Examples}
\textbf{\textit{Example 1}}:\\
Consider SDAE (\ref{equation:auto-UNSDAE-example}) form the previous example. The goal is to construct an SDE that has a bounded m-solution such that $\mathbb{P} ( ||g(x(t))||>\epsilon )\leq \alpha$. Suppose $\alpha = 0.8$ and $\epsilon = 0.5$. In step 2 of algorithm {\ref{alg:bounded-sol}} $J = \sup_{x\in U}\{ Tr(A(x)A^T(x))\}$, where U is the neighbourhood of the initial condition. In the current example, let $U = (-2,2)\times(-5,5)$. Hence, $J = \sup_{x\in U}\{ 0.04(2-3x_1^2 )^2 \} = 4$, and $b>10$. Let $b = 11$.\\ The new index 1 SDAE that gives a bounded m-solution for the given SDAE is
\begin{subequations}
\begin{equation}
d \begin{bmatrix} x_1\\ x_2 \end{bmatrix} = \begin{bmatrix} x_1 + x_{1}^{2} + u\\ 1 \end{bmatrix} dt + \begin{bmatrix} 0.2 \quad 0\\ 0\quad 0 \end{bmatrix} dW_t,
\end{equation}
\begin{multline}
h(x,u) = (\mathcal{D}g(x))f(x,u)+ \dfrac{Tr}{2}(\sigma^T(x)(\mathcal{D}^2g(x))\sigma(x)) + b(g(x))\\
= -2cos(4x_2) + (2-3x_1^2)(x_1 + x_1^2 + u) - 0.12x_1 + 11g(x_1,x_2)=0
\end{multline}
\end{subequations}
Now either lemma (\ref{lem:SDEforSDAE}) can be used to arrive at an SDE for the SDAE or Newton's iteration can be used at every time step to solve the constraint for $u$. In the current example, $u$ can be trivially obtained from the algebraic equation $h(x,u) = 0$.
\begin{multline}
u = \dfrac{2cos(4x_2) - 11g(x)}{(2 - 3x_1^2)} +  \dfrac{0.12x_1 - (2 - 3x_1^2)(x_1 + x_1^2)}{(2 - 3x_1^2)},
\end{multline}
as long as $x_1\neq \pm\sqrt{2/3}$.
Figure \ref{fig:UNSDAE-bounded-m-sol} shows the $x_1$ v/s $x_2$ plot for a bounded m-solution of SDAE \ref{equation:auto-UNSDAE-example}. The numerical integration is performed using Euler-Maruyama method with time stepping of $10^{-4}$.
\begin{figure}[h]
\includegraphics[width=\linewidth]{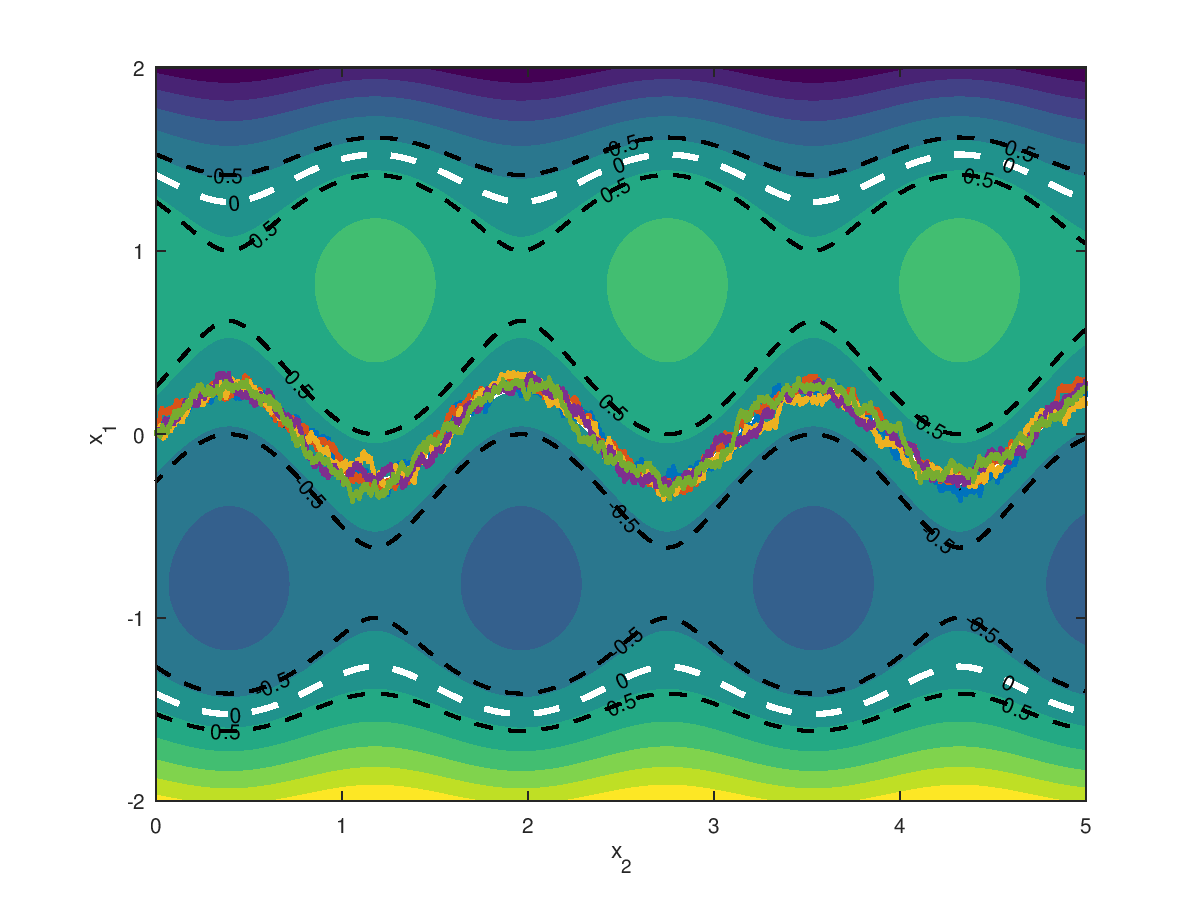}
\caption{$x_1$ v/s $x_2$ plot for a bounded m-solution of SDAE \ref{equation:auto-UNSDAE-example}. The white dashed line indicates the algebraic equation and the colored lines show $x_1$ v/s $x_2$ plot for different realizations.}
\label{fig:UNSDAE-bounded-m-sol}
\end{figure}
\medskip
\\
\noindent \textbf{\textit{Example 2}}:\\
We will consider another example of controlling the trajectory of a particle in gravitational field due to a single body in 2-dimensional space. Suppose the object of unit radius is located the origin and produces a gravitational acceleration field, which is given by \[G(x) = -\dfrac{5x}{||x||^3}.\]
Hence, the in presence of additive noise, the dynamics is given by
\begin{subequations}
\label{equation: example SDE_2}
\begin{equation}
\dot{x} = v
\end{equation}
\begin{equation}
d{v} = \left(G(x) + u\right)dt + \sum_{l = 1}^2\sigma dW^l_t,
\end{equation}
\end{subequations}
where $\sigma = 0.3$. Physically, this noise can be due to noisy position feedback data or due to unaccounted forces like photonic pressure, gravitational modelling errors etc. Our objective is to make the particle follow the trajectory that is given by \[\phi(t) = \begin{bmatrix} -3cos(t) +4cos(3t/4)\\ -3sin(t) +4sin(3t/4) \end{bmatrix}.\]
Hence, the constraint is 
\begin{equation}
g(x,t) = x-\phi(t).
\end{equation}
In figure \ref{fig:SDE-ex2-without_control}, we show the plot for the constraint and the SDE \eqref{equation: example SDE_2} with $u = 0$.
\begin{figure}[h]
\begin{center}
\includegraphics[width=\linewidth]{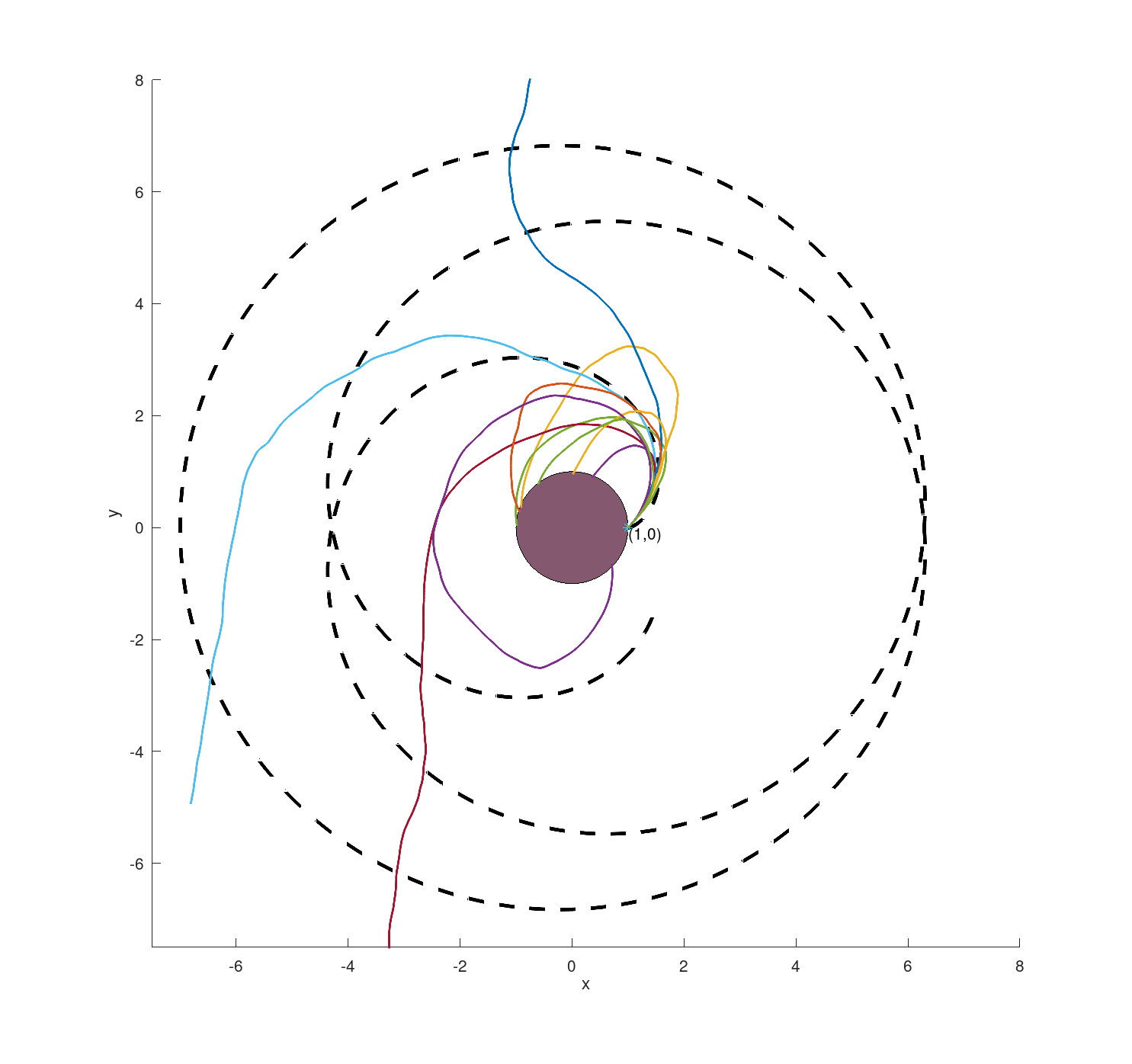}
\end{center}
\vspace{-2em}
\caption{Trajectory plot $(x(t))$ for a particle following equation \eqref{equation: example SDE_2} with $u = 0$. The initial condition for the SDE is taken as $(x(0),v(0)) = ((1,0),(1.8,1.8))$. The black dashed line indicates the constraint trajectory $\phi(t)$ and the colored lines show the trajectory plot for $x(t)$ in different realizations. The unit circle at the origin indicates the object responsible for the gravitational field.}
\label{fig:SDE-ex2-without_control}
\end{figure}
The Ito differentiation of the constraint gives,
\[g(x(t),t) = g(x(0),0) + \int_0^t(v(s) - \phi'(s))ds.\]
Therefore, if $g(x(0),0) = 0$ and $v(s) = \phi'(s)$ for all $s\in [0,t]$ then $g(x(t),t) = 0$. Moreover, when $g(x(0),0) = 0$, as $\sup_{z\in[0,t]}||g(x(z),z)|| = \sup_{z\in[0,t]}||\int_0^z(v(s) - \phi'(s))ds||\leq t\sup_{s\in[0,t]}||(v(s) - \phi'(s))||$,
\begin{equation}
\mathbb{P}(\sup_{z\in[0,t]}||g(x(z),z)||>\epsilon)\leq \mathtt{P}(t\sup_{s\in[0,t]}||(v(s) - \phi'(s))||>\epsilon) = \mathtt{P}(\sup_{s\in[0,t]}||(v(s) - \phi'(s))||>\epsilon /t).
\end{equation}
With $h(x,v,t) = v - \phi'(t)$ as the new constraint, we need to find the bounded m-solution, upto $\epsilon /t$ with probability $\alpha$.
\begin{figure}[h]
\begin{center}
\includegraphics[width=0.9\linewidth]{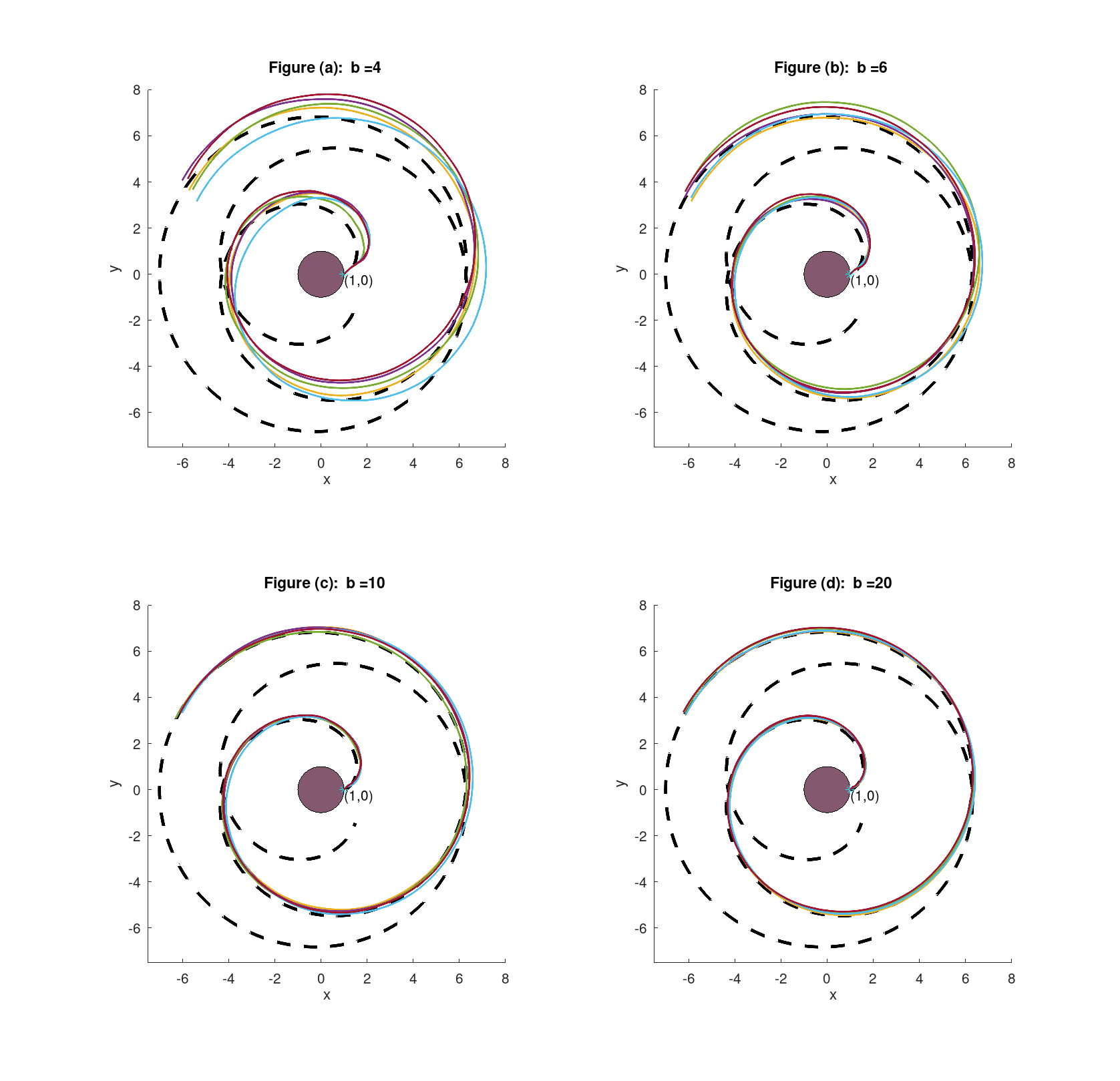}
\end{center}
\vspace{-2em}
\caption{Trajectory plot $(x(t))$ when obtained as a bounded m-solution of the SDAE \ref{equation: example2 SDAE}. The initial condition for the SDE is taken as $(x(0),v(0)) = ((1,0),(0,0))$. The black dashed line indicates the constraint trajectory $\phi(t)$ and the colored lines show the trajectory plot for $x(t)$ in different realizations. The unit circle at the origin indicates the object responsible for the gravitational field. Each plot is for different value of $b$.}
\label{fig:SDAE-ex2}
\end{figure}
From algorithm \ref{alg:bounded-sol}, we know that if we choose $b>\dfrac{Jt^2}{2\epsilon^2 \alpha},$ then we can ensure that $\mathbb{P}(\sup_{z\in[0,t]}||g(x(z),z)||>\epsilon)\leq \alpha.$ We will choose large enough $b$ to avoid explicit computation of $J$. Figure \ref{fig:SDAE-ex2} shows trajectory plots for different realizations for different values of $b$ for SDAE \ref{equation: example2 SDAE}.
\begin{subequations}
\label{equation: example2 SDAE}
\begin{equation}
\dot{x} = v,
\end{equation}
\begin{equation}
d{v} = \left(G(x) + u\right)dt + \sum_{l = 1}^2\sigma dW^l_t,
\end{equation}
\begin{equation}
h(x,v,t) = v-\phi'(t),
\end{equation}
\end{subequations}
where $\sigma = 0.3$ and $\phi(t) = \begin{bmatrix} -3cos(t) +4cos(3t/4)\\ -3sin(t) +4sin(3t/4) \end{bmatrix}$.\\
The solution trajectory of the constraint problem is shown in figure \ref{fig:SDAE-ex2}. As observed in figure \ref{fig:SDAE-ex2}, as the value of $b$ increases, the approximation in constraint satisfaction is reduced; i.e., as $b\to \infty$, \[\mathbb{P}(\sup_{z\in[0,t]}||g(x(z),z)||>\epsilon)\to 0\] for any value of $\epsilon>0$ and $\alpha\in (0,1]$.
\subsection{Comparison of the approximate methods}
\label{sec:comparison of techni.}
The advantage of the first method given in section \ref{sec:gen_ito} is that the constraint bound is satisfied with probability 1. The first method works by converting the given SDAE into a stochastic partial differential equation using the method of characteristic. Then the function for the state vector is chosen such that the stochastic partial differential equation for the constraint has a bounded solution. The algebraic variable is found by freezing the dynamics of the constraint stochastic partial differential equation. When the stochastic partial differential equation is converted back to the SDE, it is expected that the constraint bound is still respected. However, this fails to happen if there is a finite time explosion of the resulting SDE for the algebraic equation. Upto a certain extent, the finite time explosion can be controlled by an appropriate choice of the function $y(0,u)$. On the other hand, the second technique given in section \ref{sec:bounded_ito} gives a minimum probability (but not unit probability) with which the constraint bound is satisfied. The key idea, of Lyapunov stability of SDEs, used in this method has been well documented in literature to obtain a solution for SDEs constrained to sets. We have used this idea for SDAEs. Compared to method 1, numerical implementation of this method seems simpler due of fewer terms than that in method 1. Choice between the two techniques should be made on the basis of the criticality of the application.

\section{Concluding remarks}
\label{sec:conclusions}
In this article we have attempted to generalize the notion of index from the theory of Differential-Algebraic Equations to the stochastic setting. We have given the sufficient condition for existence of a unique solution for Stochastic Differential Algebraic Equations for SDAEs with same number of algebraic equations as that of the algebraic variables. We have also given necessary conditions for the existence of the solution. Based of the necessary condition, we establish the existence of ``ill-posed" equations for which there is no solution. Since it is very common to encounter ill-posed SDAEs in many applications, techniques to find approximate solution for all completely high index SDAEs are necessary. Therefore, we proposed two different methods that can be used to solve (approximately) any type of completely high index equation (including deterministic DAEs, in which the noise terms can be simply considered zero). With this we conclude our work.

\section*{Acknowledgment}
This work was partially supported by the grant SERB-EMR/2016/002022-EEC of Science and Engineering Research Board, Government of India.

\bibliography{references}

\end{document}